\numberwithin{equation}{section}
\def\@cite#1#2{{\m@th\upshape\bfseries%
[{#1\if@tempswa{\m@th\upshape\mdseries, #2}\fi}]}}
\theoremstyle{plain}
\newtheorem{theorem}{Theorem}[section]
\newtheorem{corollary}[theorem]{Corollary}
\newtheorem{proposition}[theorem]{Proposition}
\newtheorem{lemma}[theorem]{Lemma}
\theoremstyle{definition}
\newtheorem{definition}[theorem]{Definition}
\newtheorem{example}[theorem]{Example}
\newtheorem{remark}[theorem]{Remark}
\newtheorem{hypothesis}{Hypothesis}
\theoremstyle{remark}
\renewcommand{\H}{{\mathcal{H}}}
\newcommand{\K}{{\mathcal{K}}}
\renewcommand{\O}{{\mathcal{O}}}
\newcommand{\T}{{\mathcal{T}}}
\newcommand{\rS}{\mathrm{S}}
\newcommand{\bF}{\mathbb{F}}
\newcommand{\bN}{\mathbb{N}}
\newcommand{\bT}{\mathbb{T}}
\newcommand{\fS}{{\mathfrak{S}}}
\newcommand{\fZ}{{\mathfrak{Z}}}
\newcommand{\wot}{\textsc{wot}}
\DeclareMathOperator*{\sotsum}{\textsc{sot--}\!\!\sum}
\newcommand{\Alg}{\operatorname{Alg}}
\newcommand{\Span}{\operatorname{span}}
\begin{document}
%%%%%%%%%%%%%%%%%%%%%%%%%%%%%%%%

%%%%%%%%%%%%%%%%%%%%%%%%%%%%%%%%
\title[Self-adjoint free semigroupoid algebras]{All finite transitive graphs admit a self-adjoint free semigroupoid algebra}

\author[A. Dor-On]{Adam Dor-On}
\address{Department of Mathematical Sciences \\ University of Copenhagen \\ Copenhagen\\ Denmark}
\email{adoron@math.ku.dk}

\author[C. Linden]{Christopher Linden}
\address{Department of Mathematics \\ University of Illinois at Urbana-Champaign\\ Urbana\\ IL \\ USA}
\email{clinden2@illinois.edu}

\subjclass[2010]{Primary: 47L55, 47L15, 05C20.}
\keywords{Graph algebra, Cuntz Krieger, free semigroupoid algebra, road coloring, periodic, cyclic decomposition, directed graphs}

\thanks{The first author was supported by an NSF grant DMS-1900916 and by the European Union's Horizon 2020 Marie Sklodowska-Curie grant No 839412.}

\maketitle

%%%%%%%%%%%%%%%%%%%%%%%%%%%%%%%%
\begin{abstract}
In this paper we show that every non-cycle finite transitive directed graph has a Cuntz-Krieger family whose WOT-closed algebra is $B(\H)$. This is accomplished through a new construction that reduces this problem to in-degree $2$-regular graphs, which is then treated by applying the periodic Road Coloring Theorem of B\'eal and Perrin. As a consequence we show that finite disjoint unions of finite transitive directed graphs are exactly those finite graphs which admit self-adjoint free semigroupoid algebras.
\end{abstract}

%%%%%%%%%%%%%%%%%%%%%%%%%%%%%%%%
\section{Introduction}
%%%%%%%%%%%%%%%%%%%%%%%%%%%%%%%%

One of the many instances where non-self-adjoint operator algebra techniques are useful is in distinguishing representations of C*-algebras up to unitary equivalence. By work of Glimm we know that classifying representations of non-type-I C*-algebras up to unitary equivalence cannot be done with countable Borel structures \cite{Gli61}. Hence, in order to distinguish representations of Cuntz algebra $\O_n$, one either restricts to a tractable subclass or weakens the invariant. By restricting to permutative or atomic representations, classification was achieved by Bratteli and Jorgensen in \cite{BJ99} and by Davidson and Pitts in \cite{DP99}.

Since general representations of $\O_n$ are rather unruly, one can weaken unitary equivalence by considering isomorphism classes of not-necessarily-self-adjoint free semigroup algebras, which are WOT-closed operator algebras generated by the Cuntz isometries of a given representation of $\O_n$. The study of free semigroup algebras originates from the work of Popescu on his non-commutative disc algebra \cite{Pop96}, and particularly from work of Arias and Popescu \cite{AP00}, and of Davidson and Pitts \cite{DP98, DP99}. This work was subsequently used by Davidson, Katsoulis and Pitts to establish a general non-self-adjoint structure theorem for any free semigroup algebra \cite{DKP01} which can be used to distinguish many representations of Cuntz algebra $\O_n$ via non-self-adjoint techniques. 

The works of Bratteli and Jorgensen on iterated function systems were eventually generalized, and classification of Cuntz-Krieger representations of directed graphs found use in the work of Marcolli and Paolucci \cite{MP11} for producing wavelets on Cantor sets, and in work of Bezuglyi and Jorgensen \cite{BJ15} where they are associated to one-sided measure-theoretic dynamical systems called ``semi-branching function systems''. 

Towards establishing a non-self-adjoint theory for distinguishing representations of directed graphs, and by building on work of many authors \cite{MS11, DLP05, JK05, KK05, Ken11, Ken13, KP04}, the first author together with Davidson and B. Li extended the theory of free semigroup algebras to classify representations of directed graphs via non-self-adjoint techniques \cite{DDL20}.

\begin{definition}
Let $G=(V,E,r,s)$ be a directed graph with range and source maps $r,s: E \rightarrow V$. A family $\rS = (S_v,S_e)_{v\in V,e\in E}$ of operators on Hilbert space $\mathcal{H}$ is a \emph{Toeplitz-Cuntz-Krieger} (TCK) family if
\begin{enumerate}
\item[1.]
$\{S_v\}_{v\in V}$ is a set of pairwise orthogonal projections;

\item[2.]
$S_e^*S_e = S_{s(e)}$ for every $e\in E$;

\item[3.]
$\sum_{e\in F} S_eS_e^* \leq S_v$ for every finite subset $F\subseteq r^{-1}(v)$.
\end{enumerate}
We say that $\rS$ is a \emph{Cuntz-Krieger} (CK) family if additionally
\begin{enumerate}
\item[4.]
$\sum_{e \in r^{-1}(v)}S_eS_e^* = S_v$ for every $v\in V$ with $0 < |r^{-1}(v)| < \infty$.
\end{enumerate}
We say $\rS$ is a \textit{fully-coisometric} family if additionally
\begin{enumerate}
\item[5.]
$\sotsum_{e \in r^{-1}(v)} S_e S_e^* = S_v$ for every $v \in V$.
\end{enumerate}
\end{definition}

Given a TCK family $\rS$ for a directed graph $G$, we say that $\rS$ is \emph{fully supported} if $S_v \neq 0$ for all $v\in V$. When $\rS$ is not fully supported, we may induce a subgraph $G_{\rS}$ on the support $V_{\rS} = \{ \ v \in V \ | \ S_v \neq 0 \ \}$ of $\rS$ so that $\rS$ is really just a TCK family for the smaller graph $G_{\rS}$. Thus, if we wish to detect some property of $G$ from a TCK family $\rS$ of $G$, we will have to assume that $\rS$ is fully supported. When $G$ is transitive and $S_w \neq 0$ for some $w \in V$ it follows that $\rS$ is fully supported.

Studying TCK or CK families amounts to studying representations of Toeplitz-Cuntz-Krieger and Cuntz-Krieger C*-algebras. More precisely, let $\T(G)$ and $\O(G)$ be the universal C*-algebras generated by TCK and CK families respectively. Then representations of TCK or CK C*-algebras are in bijection with TCK or CK families respectively. The C*-algebra $\O(G)$ is the well-known graph C*-algebra of $G$, which generalizes the Cuntz--Krieger algebra introduced in \cite{CK80} for studying subshifts of finite type. We recommend \cite{Raeb05} for further preliminaries on TCK and CK families, as well as C*-algebras associated to directed graphs. 

When $G=(V,E,r,s)$ is a directed graph, we denote by $E^{\bullet}$ the collection of finite paths $\lambda = e_1 ... e_n$ in $G$ where $s(e_i) = r(e_{i+1})$ for $i=1,...,n-1$. In this case we say that $\lambda$ is of length $n$, and we regard vertices as paths of length $0$. Given a TCK family $\rS = (S_v,S_e)$ and a path $\lambda = e_1 ... e_n$ we define $S_{\lambda} = S_{e_1} \circ ... \circ S_{e_n}$. We extend the range and source maps of paths $\lambda = e_1...e_n$ by setting $r(\lambda) := r(e_1)$ and $s(\lambda) := s(e_n)$, and for a vertex $v \in V$ considered as a path we define $r(v) = v = s(v)$. A path $\lambda$ of length $|\lambda| >0$ is said to be a cycle if $r(\lambda) = s(\lambda)$. We will often not mention the range and source maps $r$ and $s$ in the definition of a directed graphs, and understand them from context.

\begin{hypothesis}\label{h:1}
Throughout the paper we will assume that whenever $\rS=(S_v,S_e)$ is a TCK family, then $\sotsum_{v\in V} S_v = I_{\H}$. In terms of representations of the C*-algebras this is equivalent to requiring that all $*$-representations of our C*-algebras $\T(G)$ and $\O(G)$ are non-degenerate.
\end{hypothesis}

\begin{definition}
Let $G=(V,E)$ be a directed graph, and let $\rS = (S_v,S_e)$ be a TCK family on a Hilbert space $\mathcal{H}$. The WOT-closed algebra $\fS$ generated by $\rS$ is called a \emph{free semigroupoid algebra} of $G$.
\end{definition}

The main purpose of this paper is to characterize which finite graphs admit self-adjoint free semigroupoid algebras. For the $n$-cycle graph, we know from \cite[Theorem 5.6]{DDL20} that $M_n(L^{\infty}(\mu))$ is a free semigroupoid algebra when $\mu$ is a measure on the unit circle $\bT$ which is not absolutely continuous with respect to Lebesgue measure $m$ on $\bT$. Thus, an easy example for a self-adjoint free semigroupoid algebra for the $n$-cycle graph is simply $M_n(\mathbb{C})$, by taking some Dirac measure $\mu = \delta_z$ for $z\in \bT$. 

On the other hand, for non cycle graphs examples which are self-adjoint are rather difficult to construct, and the first example showing this is possible was provided by Read \cite{Rea05} for the graph with a single vertex and two loops. More precisely, Read shows that there are two isometries $Z_1, Z_2$ on a Hilbert space $\H$ with pairwise orthogonal ranges that sum up to $\H$ such that the WOT-closed algebra generated by $Z_1,Z_2$ is $B(\H)$. Read's proof was later streamlined and simplified by Davidson in \cite{Dav06}.

\begin{definition}
Let $G = (V,E)$ be a directed graph. We say that $G$ is 
\begin{enumerate}
\item
\emph{transitive} if there is a path between any one vertex and another. 
\item
\emph{aperiodic} if for any two vertices $v,w \in V$ there is a $K_0$ such that any length $K\geq K_0$ can occur as the length of a path from $v$ to $w$.
\end{enumerate}
\end{definition}

Notice that for \emph{finite} (both finitely many vertices and edges) transitive graphs, the notions of a CK family and fully-coisometric TCK family coincide. In \cite[Theorem 4.3 \& Corollary 6.13]{DDL20} restrictions were found on graphs and TCK families so as to allow for self-adjoint examples.

\begin{theorem}[Theorem 4.3 \& Corollary 6.13 in \cite{DDL20}] \label{t:sa-rest}
Let $\fS$ be a free semigroupoid algebra generated by a TCK family $\rS$ of a directed graph $G = (V,E)$ such that $S_v \neq 0$ for all $v\in V$. If $\fS$ is self-adjoint then
\begin{enumerate}
\item
$\rS$ must be fully-coisometric, and;
\item
$G$ must be a disjoint union of transitive components.
\end{enumerate}
\end{theorem}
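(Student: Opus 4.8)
The plan is to derive both conclusions from a single structural principle: since $\fS$ is self-adjoint, every $\fS$-invariant subspace is automatically reducing (if $T\mathcal M\subseteq\mathcal M$ for all $T\in\fS$, then $T^*\mathcal M\subseteq\mathcal M$ as well, forcing $\mathcal M^\perp$ to be invariant). I would treat the two statements separately, establishing (ii) first since its proof does not use (i), and then using its output to remove a degeneracy in (i). Throughout I use that $\{S_v\}$ decomposes $\H=\bigoplus_v\H_v$ with $\H_v=S_v\H$, that each $S_e$ maps $\H_{s(e)}$ isometrically into $\H_{r(e)}$ (from $S_e^*S_e=S_{s(e)}$ and $S_eS_e^*\le S_{r(e)}$), and that full support gives $S_e\neq 0$ for every $e$.

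For (ii): call $U\subseteq V$ \emph{forward-invariant} if $s(e)\in U\Rightarrow r(e)\in U$ for every $e\in E$, and set $P_U=\sum_{v\in U}S_v$. A short computation with $S_e=S_{r(e)}S_eS_{s(e)}$ shows that forward-invariance of $U$ makes $P_U\H$ invariant under each generator $S_e$ (and trivially under each $S_v$), hence under $\fS$. By the principle above, $P_U$ then reduces $\fS$, so $P_U$ commutes with every $S_e$; comparing $S_eP_U$ with $P_US_e$ and using $S_e\neq 0$ forces $s(e)\in U\iff r(e)\in U$. Thus no edge joins $U$ to $V\setminus U$ in either direction. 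Applying this to the forward-reachable set $U_v=\{w: v\rightsquigarrow w\}$, which is forward-invariant, shows $U_v$ is a union of weakly connected components of $G$; since $U_v$ plainly lies in the weak component of $v$, it equals it. Hence every vertex in the weak component of $v$ is reachable from $v$, for every $v$, so each weak component is strongly connected, i.e. transitive, and there are no edges between distinct components. This is exactly the assertion that $G$ is a disjoint union of transitive components.

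For (i): let $P_v=S_v-\sotsum_{e\in r^{-1}(v)}S_eS_e^*\le S_v$ be the defect projection at $v$, so that full coisometry is the statement that every $P_v=0$. Suppose some $P_v\neq 0$ and choose a unit vector $\xi\in P_v\H$. Using that the projections $\{S_eS_e^*\}_e$ are pairwise orthogonal and dominated by $S_{r(e)}$, one checks $S_e^*\xi=0$ for \emph{every} $e\in E$, so $\xi$ is a wandering vector. Pick an edge $e_0$ with $s(e_0)=v$; then $\eta:=S_{e_0}\xi$ satisfies $\|\eta\|=1$ and $S_{e_0}^*\eta=S_v\xi=\xi$. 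Now consider $\mathcal N=\overline{\spn}\{S_\lambda\xi:\lambda\text{ a path},\ s(\lambda)=v,\ |\lambda|\ge 1\}$. The relations $S_fS_{r(\lambda)}=S_f$ (if $s(f)=r(\lambda)$, else $0$) show $\mathcal N$ is invariant under $\fS$, while the wandering property $S_f^*\xi=0$ gives $\langle\xi,S_\lambda\xi\rangle=0$ for all $|\lambda|\ge 1$, so $\xi\perp\mathcal N$. But $\eta\in\mathcal N$ while $S_{e_0}^*\eta=\xi\notin\mathcal N$, so $\mathcal N$ is invariant and \emph{not} reducing, contradicting self-adjointness. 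Hence every $P_v=0$ and $\rS$ is fully coisometric.

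The main obstacle is the wandering-vector argument in (i): one must carry the Cuntz--Krieger bookkeeping carefully ($S_e^*\xi=0$ for all $e$, the invariance of $\mathcal N$, and $\xi\perp\mathcal N$), and, crucially, one needs the defect vertex $v$ to emit an edge $e_0$ so that $\xi=S_{e_0}^*\eta$ can escape $\mathcal N$. This emitting hypothesis is the one genuine gap: an isolated edge-less vertex would give $\fS\cong\mathbb C$, which is self-adjoint yet not fully coisometric. Proving (ii) first mitigates this, since in a disjoint union of transitive components any vertex carrying a defect either emits an edge or forms a single edge-less component, the degenerate case one excludes (e.g. by assuming $G$ has no sinks). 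By contrast, part (ii) is comparatively routine once the invariant-equals-reducing principle is in hand, the only subtlety there being the faithful translation of forward-invariance of vertex sets into invariance of the associated subspaces.
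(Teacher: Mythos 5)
The paper itself offers no proof of this statement: it is imported verbatim from \cite{DDL20} (Theorem 4.3 and Corollary 6.13 there), so there is no in-paper argument to compare yours against. Judged on its own, your proof is correct and is considerably more elementary and self-contained than the cited source, which obtains (i) via a Wold-type decomposition (a non-coisometric family has a left-regular-type summand, detected by a wandering vector) and (ii) as a consequence of the general Structure Theorem for free semigroupoid algebras. You instead derive both parts from the single observation that every invariant subspace of a self-adjoint WOT-closed algebra is reducing, applied to $P_U\H$ for forward-invariant vertex sets $U$ in (ii) and to the cyclic subspace $\N$ generated by a defect vector in (i); all the bookkeeping you invoke ($S_e=S_{r(e)}S_eS_{s(e)}$, hence $S_eP_U=[s(e)\in U]S_e$ and $P_US_e=[r(e)\in U]S_e$; $S_e^*\xi=0$ for $\xi$ in the range of $S_v-\sotsum_{e\in r^{-1}(v)}S_eS_e^*$; $\xi\perp\N$; invariance of a closed subspace passing to WOT limits) checks out, and full support is used exactly where needed, namely to get $S_e\neq0$ from $S_e^*S_e=S_{s(e)}$.

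The one caveat is the degeneracy you flag yourself: your argument for (i) needs the defect vertex to emit an edge, and an isolated edgeless vertex gives $\fS\cong\mathbb{C}$, which is self-adjoint yet fails condition (5) of the paper's Definition as literally written (the empty sum over $r^{-1}(v)=\emptyset$ cannot equal $S_v\neq0$). This is a defect of the statement rather than of your proof: once (ii) is in hand, any vertex carrying a defect and emitting no edge must be an isolated transitive component, and this situation never occurs where Theorem \ref{t:sa-rest} is actually invoked in this paper (transitive graphs with at least one edge, e.g.\ in Proposition \ref{p:edge-cont-FSA}). It would be worth one sentence in a write-up noting that (ii) is proved first precisely so that the emitting hypothesis in (i) is automatic outside that trivial case.
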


Showing that non-cycle transitive graphs other than the single vertex with two loops admit a self-adjoint free semigroupoid algebra required new ideas from directed graph theory.

\begin{definition}
Let $G = (V,E)$ be a transitive, finite and in-degree $d$-regular graph. A \emph{strong edge coloring} $c:E \rightarrow \{1,...,d\}$ is one where $c(e)\neq c(f)$ for any two distinct edges $e,f \in r^{-1}(v)$ and $v\in V$.
\end{definition}

Whenever $G = (V,E)$ has a strong edge coloring $c$, it induces a labeling of finite paths $c : E^{\bullet} \rightarrow \mathbb{F}^+_d$ which is defined for $\lambda = e_1...e_n$ via $c(\lambda) = c(e_1) ...  c(e_n)$. Since $c$ is a strong edge coloring, whenever $w \in V$ is a vertex and $\gamma = i_1 ... i_n \in \mathbb{F}^+_d$ with $i_j \in \{1,...,d\}$ is a word in colors, we may inductively construct a path $\lambda = e_1... e_n$ such that $c(e_j) = i_j$ and $r(e_1) = w$. In this way, every vertex $w$ and a word in colors uniquely define a back-tracked path whose range is $w$.

\begin{definition}
Let $G = (V,E)$ be a transitive, finite and in-degree $d$-regular graph. A strong edge coloring is called \emph{synchronizing} if for some vertex $v \in V$ there is a word $\gamma_v \in \mathbb{F}^+_d$ in colors $\{1,...,d\}$ such that for any other vertex $w\in V$, the unique back-tracked path $\lambda$ with range $w$ and color $c(\lambda) = \gamma_v$ has source $s(\lambda) = v$.
\end{definition}

It is easy to see that if a finite in-degree regular graph has a \emph{synchronizing} strong edge coloring then it is aperiodic. The converse of this statement is a famous conjecture made by Adler and Weiss in the late 60s \cite{AW70}. This conjecture was eventually proven by Trahtman \cite{Tra09} and is now called the Road Coloring Theorem. The Road Coloring Theorem was the key device that enabled the construction of self-adjoint free semigroupoid algebras for in-degree regular aperiodic directed graphs in \cite[Theorem 10.11]{DDL20}.

\begin{definition}
Let $G=(V,E)$ be a transitive directed graph. We say that $G$ \emph{has period $p$} if $p$ is the largest integer such that we can partition $V=\sqcup_{i =1}^p V_i$ so that when $e \in E$ is an edge with $s(e) \in V_i$ then $r(e) \in V_{i+1}$ (here we identify $p+1 \equiv 1$). This decomposition is called the \emph{cyclic decomposition} of $G$, and the sets $\{V_i\}$ are called the \emph{cyclic components} of $G$.
\end{definition}

\begin{remark}
In a transitive graph $G$ every vertex $v \in V$ has a cycle of finite length through it. Hence, if $G$ does not have finite periodicity this would imply that any cycle around $v$ must have arbitrarily large length. Hence, transitive graphs have finite periodicity.
\end{remark}

It is a standard fact that $G$ is $p$-periodic exactly when for any two vertices $v,w \in V$ there exists $0 \leq r < p$ and $K_0$ such that for any $K \geq K_0$ the length $pK + r$ occurs as the length of a path from $v$ to $w$, while $pK +r'$ does not occur for any $K$ and $0 \leq r' < p$ with $r\neq r'$. Hence, $G$ is $1$-periodic if and only if it is aperiodic. We will henceforth say that $G$ is periodic when $G$ is $p$-periodic with period $p \geq 2$. For a transitive directed graph, the period $p$ is also equal to the greatest common divisor of the lengths of its cycles. This equivalent definition of periodicity of a transitive directed graph is the one most commonly used in the literature.

In \cite[Question 10.13]{DDL20} it was asked whether periodic in-degree $d$-regular finite transitive graphs with $d\geq 3$ have a self-adjoint free semigroupoid algebra. In this paper we answer this question in the affirmative. In fact, we are able to characterize exactly which finite graphs have a self-adjoint free semigroupoid algebra. A generalization of the road coloring for periodic in-degree regular graphs proven by B\'eal and Perrin \cite{BP14} is then the replacement for Trahtman's aperiodic Road Coloring Theorem when the graph is periodic.

\begin{theorem} \label{t:safsa}
Let $G = (V,E)$ be a finite graph. There exists a fully supported CK family $\rS=(S_v,S_e)$ which generates a \emph{self-adjoint} free semigroupoid algebra $\fS$ if and only if $G$ is the union of transitive components.

Furthermore, if $G$ is transitive and not a cycle then $B(\H)$ is a free semigroupoid algebra for $G$ where $\H$ is a separable infinite dimensional Hilbert space.
\end{theorem}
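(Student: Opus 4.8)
The plan is to separate the biconditional from the ``furthermore'' clause and to treat the latter as the real content. The forward implication of the biconditional is already available: if a fully supported CK family $\rS$ generates a self-adjoint $\fS$, then Theorem~\ref{t:sa-rest} forces $\rS$ to be fully-coisometric and $G$ to be a disjoint union of transitive components, which is the \textbf{only if} direction. For the \textbf{if} direction I would first reduce to a single transitive component: a finite disjoint union $G = \bigsqcup_i G_i$ decomposes any TCK family as an orthogonal direct sum over the components, a finite orthogonal direct sum of self-adjoint algebras is again self-adjoint, and conversely CK families on the $G_i$ assemble to a fully supported CK family on $G$ compatible with Hypothesis~\ref{h:1}. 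Thus it suffices to produce, for each transitive component, a fully supported CK family with self-adjoint WOT-closed algebra. If the component is an $n$-cycle, this is the observation recorded before Theorem~\ref{t:sa-rest}: a Dirac measure $\mu = \delta_z$ in \cite[Theorem 5.6]{DDL20} yields $M_n(\bC)$, which is self-adjoint. Everything therefore reduces to the furthermore statement: a transitive graph that is not a cycle admits a CK family generating $B(\H)$.

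For the furthermore statement, the first move is the combinatorial reduction advertised in the abstract. From a transitive non-cycle graph $G$ I would construct an in-degree $2$-regular transitive graph $\tilde G$, together with a correspondence between their path structures, engineered so that a CK family of $\tilde G$ whose WOT-closed algebra is $B(\tilde\H)$ induces, by compression to the subspace spanned by the vertex projections coming from $V$, a fully supported CK family of $G$ whose WOT-closed algebra is the corresponding corner, again a full $B(\H)$. Concretely the construction regularizes the in-degrees by inserting auxiliary in-degree-$2$ vertices: vertices of in-degree $\geq 3$ are resolved by binary ``funnel'' gadgets, and vertices of in-degree $1$ are augmented, which transitivity makes possible since $G$ not being a cycle guarantees a vertex of in-degree $\geq 2$. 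The content here is to verify that these moves preserve transitivity and non-cyclicity and that a corner of $B(\tilde\H)$ faithfully realizes the $G$-relations; I expect this bookkeeping to transfer the $B(\H)$ conclusion.

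It then remains to realize $B(\tilde\H)$ for an in-degree $2$-regular transitive non-cycle graph $\tilde G$, which is where the road coloring machinery enters. If $\tilde G$ is aperiodic, the Road Coloring Theorem \cite{Tra09} supplies a synchronizing strong edge coloring and one is in the situation of \cite[Theorem 10.11]{DDL20}; if $\tilde G$ is $p$-periodic with $p \geq 2$, its cyclic decomposition $V = \sqcup_{i=1}^p V_i$ is handled by the periodic Road Coloring Theorem of B\'eal and Perrin \cite{BP14}, which produces a coloring synchronizing within each cyclic component using words of length divisible by $p$. Given such a coloring I would build the fully-coisometric CK family following the Read--Davidson strategy \cite{Rea05, Dav06} as adapted in \cite{DDL20}: arrange the representation so that the generated WOT-closed algebra is irreducible, and exploit the synchronizing word to exhibit a nonzero rank-one operator as a WOT-limit of products $S_\lambda$ indexed by back-tracked synchronizing paths. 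An irreducible WOT-closed algebra containing a rank-one operator contains every rank-one, hence every finite-rank operator, and being WOT-closed it is all of $B(\tilde\H)$; in particular it is self-adjoint, which is precisely the Read phenomenon that the forward operators $S_e$ recover their adjoints $S_e^*$ in the WOT-closure.

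The main obstacle is the periodic case of this last step. Unlike the aperiodic setting, the cyclic grading $\tilde\H = \bigoplus_{i=1}^p \tilde\H_i$ means every word $S_\lambda$ shifts the grading by $|\lambda| \bmod p$, so a single synchronizing word, which has length a multiple of $p$, can only manufacture rank-one operators inside a fixed diagonal block $\tilde\H_i$. The delicate point is therefore to combine these per-component rank-one operators from the B\'eal--Perrin coloring with forward words $S_\lambda$ of all admissible grading shifts (available since $\tilde G$ is transitive) so as to generate finite-rank operators across all blocks, and only then invoke irreducibility to conclude $B(\tilde\H)$. Controlling the WOT-convergence of these products in the presence of the period $p$, and ensuring the reduction graph $\tilde G$ carries a period compatible with the coloring supplied by \cite{BP14}, is where I expect the analysis to be most demanding.
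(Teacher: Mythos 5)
Your architecture matches the paper's: the ``only if'' direction via Theorem~\ref{t:sa-rest}, the reduction to transitive components and direct sums, the cycle case via \cite[Theorem 5.6]{DDL20}, and for non-cycle components a combinatorial regularization to the in-degree $2$-regular setting followed by the periodic Road Coloring Theorem of B\'eal--Perrin and a Read-isometry construction. The compression idea is also workable in principle: the bijection $\theta$ between paths of $G$ and paths of $\widehat{G}$ with endpoints in $V$ (Lemma~\ref{l:bijection}) does make the compression of a CK family for $\widehat{G}$ by $P = \sum_{v \in V} S_v$ into a CK family for $G$, and $PB(\tilde{\H})P = B(P\tilde{\H})$. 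Two organizational differences from the paper: the in-degree-$1$ vertices are removed by \emph{edge contraction} (Lemma~\ref{l:edge-cont}, Proposition~\ref{p:edge-cont-FSA}), not ``augmented'' --- your phrasing leaves unclear what graph move you intend and in which direction the $B(\H)$ property transfers; and the paper never proves that $B(\tilde\H)$ is a free semigroupoid algebra of $\widehat{G}$ and then compresses --- it uses the coloring of $\widehat{G}$ only as a labelling device, assigning to each edge $e$ of $G$ the composition $Z_{\ell(e)} = Z_{c(f_1)}\circ\cdots\circ Z_{c(\hat e)}$ of Read's two isometries, so that every vertex space is a copy of Read's $\H$.

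The genuine gap is exactly the step you defer to the end: you never resolve the periodic case, you only name it as ``the most demanding'' part. Your plan --- arrange irreducibility, extract a rank-one operator from synchronizing words, combine blocks --- is a plausible outline of the aperiodic argument of \cite[Theorem 10.11]{DDL20}, but in the $p$-periodic case the synchronizing word only produces, in each vertex corner, operators lying in the WOT-closed algebra generated by $\{Z_\mu Z_{\gamma_v} : |\mu| \equiv 0 \pmod p\}$, and nothing in your sketch shows this corner is all of $B(\H)$. The paper's resolution is Proposition~\ref{p:pgen}: the WOT-closed algebra generated by the words of any fixed length $p$ in Read's isometries $Z_1, Z_2$ is still $B(\H)$ (proved by a short residue-class argument using Davidson's operators $S_{i,j,k}$ supported on words of length $2^k$). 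That proposition, fed into each vertex corner, plus Lemma~\ref{l:cornerB(H)} to pass between corners using transitivity, is what closes the argument; without it, or an equivalent substitute, your proof of the ``furthermore'' clause does not go through.
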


This paper is divided into four sections including this introduction. In Section \ref{s:prc} we translate the periodic Road Coloring Theorem of B\'eal and Perrin to a more concrete statement that we end up using. In Section \ref{s:B(H)-fsa} a reduction to graphs with in-degree at least $2$ at every vertex is made, and our new construction reduces that case to the in-degree $2$-regular case. Finally in Section \ref{s:main-theorem} we combine everything together for a proof of Theorem \ref{t:safsa} and give some concluding remarks.

\section{Periodic Road coloring} \label{s:prc}

The following is the generalization of the notion of synchronization to $p$-periodic finite graphs that we shall need.

\begin{definition} \label{d:p-synch}
Let $G = (V,E)$ be a transitive, finite and in-degree $d$-regular $p$-periodic directed graph with cyclic decomposition $V=\sqcup_{i =1}^p V_i$. A strong edge coloring $c$ of $G$ with $d$ colors is called \emph{$p$-synchronizing} if there exist
\begin{enumerate}
\item distinguished vertices $v_i \in V_i$ for each $1 \leq i \leq p$, and;
\item a word $\gamma$ such that for any $1 \leq i \leq p$ and $v \in  V_i$, the unique backward path $\lambda_v$ with $r(\lambda_v) =v$ and $c(\lambda_v) = \gamma$ has source $v_i$.
\end{enumerate}
Such a $\gamma$ is called a $p$-synchronizing word for the tuple $(v_1,...,v_p)$.
\end{definition}

In order to show that every in-degree $d$-regular $p$-periodic graph is $p$-synchronizing we will translate a periodic version of the Road Coloring Theorem due to B\'eal and Perrin \cite{BP14}. We warn the reader that the graphs we consider here are in-degree regular whereas in \cite{BP14} the graphs are out-degree regular. Thus, so as to fit our choice of graph orientation, we state their definitions and theorem with edges having reversed ranges and sources.

Let $G=(V,E)$ be a transitive, finite, in-degree $d$-regular graph. If $c: E \rightarrow \{1,...,d\}$ is a strong edge coloring, each word $\gamma \in \bF_d^+$ in colors gives rise to a map $\gamma : V \rightarrow V$ defined as follows. For $v\in V$ let $\lambda_v$ be the unique path with $r(\lambda_v) =v$ whose color is $c(\lambda) = \gamma$. Then we define $v \cdot \gamma := s(\lambda_v)$. In this way, we can apply the function $\gamma$ to each subset $I \subseteq V$ to obtain another subset of vertices $I \cdot \gamma = \{ \ v \cdot \gamma \ | \ v \in I \ \}$. 

\begin{definition}
Let $G=(V,E)$ be a transitive, finite and in-degree $d$-regular with a strong edge coloring $c : E \rightarrow \{1,...,d\}$.
\begin{enumerate}
\item
We say that a subset $I$ is a \emph{$c$-image} if there exists a word $\gamma$ such that $V \cdot \gamma = I$.
\item
A $c$-image $I \subseteq V$ is called \emph{minimal} if there is no $c$-image with smaller cardinality.
\end{enumerate}
We define the \emph{rank} of $c$ to be the size of a minimal $c$-image.
\end{definition}

Note that the rank of a transitive graph is always well-defined, since any two minimal $c$-images have the same cardinality. Next, we explain some of the language used in the statement of B\'eal and Perrin's \cite[Theorem 6]{BP14}.

A (finite) \emph{automaton} is a pair $(G,c)$ where $G=(V,E)$ is a finite directed graph and $c :E \rightarrow \{1,...,d\}$ some labeling. We say that $(G,c)$ be a \emph{complete deterministic} automaton if $c|_{r^{-1}(v)}$ is bijective for each $v\in V$. This forces $G$ to be in-degree $d$-regular with a strong edge coloring $c$. We say that an automaton is \emph{irreducible} if its underlying graph is irreducible. Finally, we say that two automata $(G,c)$ and $(H,d)$ are \emph{equivalent} if they have isomorphic underlying graphs. The statement of \cite[Theorem 6]{BP14} then says that any irreducible, complete deterministic automaton $(G,c)$ is equivalent to a complete deterministic automaton whose rank is equal to the period of $G$. This leads to the following restatement of the periodic Road Coloring Theorem of B\'eal and Perrin \cite[Theorem 6]{BP14} in the language of directed graphs and their colorings.

\begin{theorem}[Theorem 6 of \cite{BP14}]
Let $G=(V,E)$ be a transitive, finite and in-degree $d$-regular graph. Then $G$ is $p$-periodic if and only if there exists a strong edge coloring $c : E \rightarrow \{1,...,d\}$ with rank $p$.
\end{theorem}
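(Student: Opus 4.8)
The cleanest route is to prove that the period $p$ of $G$ is exactly the smallest rank attainable by a strong edge coloring, and then to read the stated equivalence off this identification. This splits into an elementary lower bound, namely that $\operatorname{rank}(c) \geq p$ for \emph{every} strong edge coloring $c$, and an existence statement, namely that \emph{some} strong edge coloring attains rank $p$. The latter is precisely the equivalence form of \cite[Theorem 6]{BP14} recorded just above (any transitive complete deterministic automaton is equivalent to one whose rank equals the period), while the former I would establish by hand from the cyclic decomposition.

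For the lower bound I would fix the decomposition $V = \sqcup_{i=1}^{p} V_i$ coming from $p$-periodicity and watch how a colored word moves the components. Since each edge with source in $V_i$ has range in $V_{i+1}$, the unique backward path $\lambda_v$ of color $\gamma$ with $r(\lambda_v) = v \in V_j$ has its source in $V_{j-n}$, where $n = |\gamma|$ and indices are taken modulo $p$; thus $V_j \cdot \gamma \subseteq V_{j-n}$ for every $j$. As $c$ is a strong edge coloring the assignment $v \mapsto v \cdot \gamma$ is a total function, so each $V_j \cdot \gamma$ is nonempty, and as $j$ ranges over $\{1, \dots, p\}$ the targets $V_{j-n}$ exhaust all $p$ cyclic components. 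Hence $V \cdot \gamma$ meets every $V_i$, giving $|V \cdot \gamma| \geq p$ for all $\gamma \in \mathbb{F}^+_d$ and therefore $\operatorname{rank}(c) \geq p$. This already yields the reverse implication of the theorem, up to the identification of $p$ as the \emph{minimal} rank: a coloring of rank $p$ forces $p \geq \operatorname{period}(G)$, while the existence statement below exhibits the period itself as an attained (hence minimal) rank, pinning the period to $p$.

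For the existence statement I would invoke \cite[Theorem 6]{BP14} in the reversed-orientation form the authors have already set up. That theorem asserts that the automaton $(G,c)$, for $c$ any strong edge coloring, is equivalent to a complete deterministic automaton of rank equal to the period of $G$; since equivalence means the underlying graphs are isomorphic, transporting the good labeling back along such an isomorphism produces a strong edge coloring of $G$ itself whose rank is $\operatorname{period}(G) = p$. Combined with the lower bound this shows that $p$ is the least attainable rank and that it is realized, which is the forward implication; together the two facts give the full equivalence.

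The genuine difficulty is entirely inside \cite[Theorem 6]{BP14}, which I treat as a black box; on my side the main obstacle is the faithful translation between conventions. One must check that passing between the out-degree-regular automata of \cite{BP14} and the in-degree-regular graphs here, accomplished by reversing all edges, preserves transitivity and period (edge reversal is a length-preserving bijection on cycles), interchanges the in- and out-degree regularity hypotheses, matches ``complete deterministic automaton'' with ``in-degree $d$-regular graph carrying a strong edge coloring,'' and, most delicately, carries their rank (defined through the forward action of words) to the rank defined here through the backward action $v \cdot \gamma = s(\lambda_v)$. Once these identifications are in place the existence statement is immediate and, with the self-contained bound of the second paragraph, delivers the stated equivalence.
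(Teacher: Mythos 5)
Your treatment of the substantive direction coincides with the paper's: the paper gives no proof of this statement at all, presenting it purely as a reorientation of \cite[Theorem 6]{BP14} (every irreducible complete deterministic automaton is equivalent to one whose rank equals the period of the underlying graph), so your black-box invocation plus the convention-checking is exactly the paper's route. Your lower bound $\operatorname{rank}(c)\geq p$ for \emph{every} strong edge coloring is correct and is a genuine addition not written out in the paper: the observation that $V_j\cdot\gamma\subseteq V_{j-|\gamma|}$ (indices mod $p$), that each such set is nonempty, and that the targets exhaust the cyclic components, is exactly right, and a version of it is used silently in the paper's Corollary \ref{c:p-synch-exists} when $\gamma$ is padded to have length a multiple of $p$ while preserving minimality.

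The gap is in your reverse implication, and it is not repairable because that implication, as literally stated, is false. From the existence of a rank-$p$ coloring your lower bound gives only $p\geq\operatorname{period}(G)$, and knowing that \emph{some other} coloring attains rank $\operatorname{period}(G)$ does not pin $p$ down, since different strong edge colorings of the same graph can have different ranks. Concretely, let $G$ have two vertices $a,b$, a loop at each, and one edge in each direction between them; this graph is transitive, finite, in-degree $2$-regular and aperiodic (period $1$). Color both loops with $1$ and both crossing edges with $2$. This is a strong edge coloring, letter $1$ acts as the identity on $V$ and letter $2$ as the transposition, so every $c$-image equals $V$ and the rank is $2$; thus a rank-$2$ coloring exists while $G$ is not $2$-periodic. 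The correct statement --- and the one your argument actually establishes --- is that $\operatorname{period}(G)$ equals the \emph{minimal} rank attainable over all strong edge colorings; equivalently, only the forward implication of the displayed theorem holds, and that is the only direction the paper ever uses (in Corollary \ref{c:p-synch-exists}). You should either restate the theorem in the ``period equals minimal rank'' form, or prove only the forward implication and drop the converse.
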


As a corollary, we have that every finite in-degree $d$-regular graph is $p$-synchronizing (in the sense of Definition \ref{d:p-synch}) when its period is $p$. This will be useful to us in the next section.

\begin{corollary} \label{c:p-synch-exists}
Let $G=(V,E)$ be a transitive, finite, in-degree $d$-regular and $p$-periodic graph with cyclic decomposition $V=\sqcup_{i =1}^p V_i$. Then there is a strong edge coloring $c : E \rightarrow \{1,...,d\}$ which is $p$-synchronizing.

Furthermore, if $\gamma$ is a $p$-synchronizing word for $(v_1,...,v_p)$ and $w_i \in V_i$ is some vertex for some $1 \leq i \leq p$, then there are vertices $w_j \in V_j$ for $j \neq i$ and a $p$-synchronizing word $\mu$ for $(w_1,...,w_p)$.
\end{corollary}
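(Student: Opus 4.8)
The plan is to extract a strong edge coloring of rank $p$ from the restated B\'eal--Perrin theorem and then to package the resulting minimal $c$-image into the form demanded by Definition~\ref{d:p-synch}. The linchpin of both parts is a single structural observation about how words interact with the cyclic decomposition: if $\gamma$ is a word of length $n$, then for each $j$ one has $V_j\cdot\gamma \subseteq V_{j-n}$ (indices read mod $p$), since each backward step from a vertex in $V_k$ lands in $V_{k-1}$. Because $j\mapsto j-n$ permutes the residues mod $p$, the sets $V_1\cdot\gamma,\dots,V_p\cdot\gamma$ lie in distinct cyclic components and are therefore pairwise disjoint, whence $|V\cdot\gamma| = \sum_{j=1}^p |V_j\cdot\gamma| \geq p$; in particular the rank of any strong edge coloring is at least $p$. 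The feature I would exploit is the opposite extreme: when $|V\cdot\gamma| = p$ and moreover $p \mid n$, every summand equals $1$ and each $V_j\cdot\gamma$ is a single vertex lying in $V_j$ itself.

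For the first assertion I would invoke the restatement of the periodic Road Coloring Theorem (Theorem 6 of \cite{BP14}) to obtain a strong edge coloring $c$ of rank $p$, and pick a word $\gamma_0$ realizing a minimal $c$-image, so $|V\cdot\gamma_0| = p$. Since applying a map cannot increase cardinality while the rank $p$ is a universal lower bound, a minimal image stays minimal under any further word: $|V\cdot(\gamma_0\eta)| = p$ for every $\eta$. I would then pad, choosing $\eta$ of length $\equiv -|\gamma_0| \pmod p$ so that $\gamma := \gamma_0\eta$ has length divisible by $p$; such an $\eta$ exists and yields a legitimate backward path from every vertex because $c$ is strong. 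By the structural observation, $V_i\cdot\gamma = \{v_i\}$ with $v_i\in V_i$ for each $i$, which is precisely the statement that $\gamma$ is $p$-synchronizing for $(v_1,\dots,v_p)$.

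For the \emph{furthermore} part I would keep $c$ fixed and adjust the word by appending rather than re-coloring. Given the prescribed $w_i\in V_i$, transitivity supplies a path $\rho$ with source $w_i$ and range $v_i$; as both endpoints lie in $V_i$, the cyclic structure forces $|\rho|$ to be divisible by $p$. Setting $\delta := c(\rho)$, uniqueness of backward paths under the strong coloring gives $v_i\cdot\delta = w_i$, so $\mu := \gamma\delta$ satisfies $V_i\cdot\mu = (V_i\cdot\gamma)\cdot\delta = \{v_i\cdot\delta\} = \{w_i\}$. Moreover $|\mu| = |\gamma| + |\delta|$ is divisible by $p$ and $|V\cdot\mu| = p$ by minimality, so the structural observation applied once more gives $V_j\cdot\mu = \{w_j\}$ for single vertices $w_j\in V_j$. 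Hence $\mu$ is $p$-synchronizing for $(w_1,\dots,w_p)$ with the prescribed $w_i$ in the $i$-th slot.

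The cardinality and length bookkeeping is routine; the one point that requires genuine care, and which I regard as the crux, is the structural observation marrying length-divisibility to minimality --- namely that a minimal $c$-image of a word whose length is a multiple of $p$ must meet each cyclic component in exactly one vertex. Everything else (padding to correct the length in Part~1, and combining transitivity with uniqueness of backward paths to relocate the distinguished vertex in Part~2) is bookkeeping organized around this single fact.
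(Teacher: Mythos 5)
Your proposal is correct and follows essentially the same route as the paper's proof: obtain a rank-$p$ coloring from the restated B\'eal--Perrin theorem, pad the minimizing word to length divisible by $p$ (minimality keeps the image size at $p$), observe that a word of length divisible by $p$ maps each $V_i$ into itself so that each $V_i\cdot\gamma$ is a singleton, and for the \emph{furthermore} part append the color of a path from $v_i$ back to the prescribed $w_i$. Your explicit general observation that $V_j\cdot\gamma\subseteq V_{j-n}$ for a word of length $n$ (giving the lower bound $|V\cdot\gamma|\geq p$) is a slightly more detailed packaging of what the paper uses implicitly, but the argument is the same.
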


\begin{proof}
Let $c$ be a strong edge coloring with a minimal $c$-image of size $p$. This means that there is a word $\gamma \in \bF_d^+$ such that $|V \cdot \gamma| = p$. If $\gamma$ is not of length which is a multiple of $p$, we may concatenate $\gamma'$ to $\gamma$ to ensure that $|\gamma \gamma'| = kp$ for some $0 \neq k \in \mathbb{N}$. Since $V \cdot \gamma$ is minimal we have that $V \cdot \gamma \gamma'$ is also of size $p$. Hence, without loss of generality we have that $|\gamma| = kp$ for some $0 \neq k \in \mathbb{N}$. Since $|\gamma| = kp$, we see that the function $\gamma : V \rightarrow V$ must send elements in $V_i$ to elements in $V_i$. Since each $V_i$ is non-empty and $|V \cdot \gamma| = p$, there is a unique $v_i \in V \cdot \gamma$ such that $V_i \cdot \gamma = \{v_i\}$. It is then clear that $(v_1,...,v_p)$ together with $\gamma$ show that $c$ is $p$-synchronizing.

For the second part, without loss of generality assume that $i=1$, and let $\lambda$ be a path with range $v_1$ and source $w_1$, whose length must be a multiple of $p$. Then for $\mu := \gamma \cdot c(\lambda)$, we still have $|V \cdot \mu| = p$ from minimality of $V \cdot \gamma$, and also $v_1 \cdot c(\lambda) = w_1$. Thus, we see that as in the above proof there are some $w_j \in V_j$ with $j\neq 1$ such that $\mu$ is $p$-synchronizing for $(w_1,...,w_p)$.
\end{proof}

%%%%%%%%%%%%%%%%%%%%%%%%%%%%%%%%
\section{$B(\H)$ as a free semigroupoid algebra} \label{s:B(H)-fsa}
%%%%%%%%%%%%%%%%%%%%%%%%%%%%%%%%

In \cite{Dav06} Read isometries $Z_1,Z_2$ with additional useful properties are obtained on a separable infinite dimensional Hilbert space $\H$. More precisely, from the proof of \cite[Lemma 1.6]{Dav06} we see that there are orthonormal bases $\{h_j \}_{j \in \bN}$ and $\{g_i \}_{i \in \bN}$ together with a sequence 
$$
S_{i,j,k} \in \Span \{ \ Z_w \ | \ w\in \mathbb{F}_2^+ \ \text{with} \ |w| = 2^k \ \}
$$ 
such that $S_{i,j,k}$ converges WOT to the rank one operator $g_i \otimes h_j^*$. As a consequence of this, in \cite[Theorem 1.7]{Dav06} it is shown that the WOT closed algebra generated by $\{ \ Z_w \ | \ \mu \in \mathbb{F}_2^+ \ \text{with} \ |w| = 2^k \ \}$ is still $B(\H)$. That $2^k$ above can be replaced with any non-zero $p \in \mathbb{N}$ was claimed after \cite[Question 10.13]{DDL20}, and we provide a proof for it here. 

\begin{proposition}\label{p:pgen} For any non-zero $p \in \mathbb{N}$, we have that the WOT-closed algebra $\fZ_p$ generated by $\{ \ Z_{\mu} \ | \ \mu \in \mathbb{F}_2^+ \ \text{with} \  |\mu| =p \ \}$ is $B(\H)$.
\end{proposition}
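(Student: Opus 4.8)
The plan is to prove that $\fZ_p = B(\H)$ by showing that $\fZ_p$ contains every rank-one operator, and then invoking the facts that the finite-rank operators are WOT-dense in $B(\H)$ and that $\fZ_p$ is WOT-closed. The starting point is Davidson's sequence $S_{i,j,k} \in \Span\{Z_w : |w| = 2^k\}$ with $S_{i,j,k}$ converging WOT to the matrix unit $g_i \otimes h_j^*$, where $\{g_i\}$ and $\{h_j\}$ are the orthonormal bases furnished by \cite[Lemma 1.6]{Dav06}. The obstacle is that the lengths $2^k$ produced by that construction are essentially never multiples of $p$, so the $S_{i,j,k}$ themselves do not lie in $\fZ_p$, which only sees words whose length is a multiple of $p$.

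The device to overcome this is a length-correcting left-multiplication combined with a pigeonhole argument. First I would note that $(2^k \bmod p)_{k \ge 1}$ takes values in $\{0,1,\dots,p-1\}$, hence is eventually periodic, so some residue $c$ is attained for all $k$ in an infinite set $K \subseteq \bN$. Fix an integer $b$ with $1 \le b \le p$ and $b \equiv -c \pmod p$, so that $b + 2^k \equiv 0 \pmod p$ for every $k \in K$. Then for any fixed word $\nu$ with $|\nu| = b$, each product $Z_\nu S_{i,j,k}$ with $k \in K$ is a finite linear combination of words $Z_{\nu w}$ of length $b + 2^k$, which is a multiple of $p$; since every word whose length is a multiple of $p$ is a product of words of length $p$, this shows $Z_\nu S_{i,j,k} \in \fZ_p$. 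Because left-multiplication by the fixed operator $Z_\nu$ is WOT-continuous and $Z_\nu(g_i \otimes h_j^*) = (Z_\nu g_i) \otimes h_j^*$, passing to the WOT-limit along $k \in K$ and using that $\fZ_p$ is WOT-closed gives $(Z_\nu g_i) \otimes h_j^* \in \fZ_p$ for all $i,j$.

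Next I would let $\nu$ range over all $2^b$ words of length $b$. Since $Z_1, Z_2$ have orthogonal ranges summing to $\H$, the Cuntz relation $\sum_{|\nu| = b} Z_\nu Z_\nu^* = I$ holds, so the subspaces $Z_\nu \H$ are pairwise orthogonal and sum to $\H$; as each $\{Z_\nu g_i\}_i$ is an orthonormal basis of $Z_\nu \H$, the family $\{Z_\nu g_i : |\nu| = b,\ i \in \bN\}$ is an orthonormal basis of $\H$, which I relabel as $\{f_\alpha\}$. The previous step then shows that all matrix units $f_\alpha \otimes h_j^*$ lie in $\fZ_p$.

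Finally I would conclude $\fZ_p = B(\H)$. Given an arbitrary rank-one operator $u \otimes w^*$, expanding $u$ in $\{f_\alpha\}$ and $w$ in $\{h_j\}$ and truncating exhibits $u \otimes w^*$ as a norm-limit of finite linear combinations of the $f_\alpha \otimes h_j^*$, each of which lies in $\fZ_p$; hence $u \otimes w^* \in \fZ_p$. Every finite-rank operator is therefore in $\fZ_p$, and since these are WOT-dense in $B(\H)$ while $\fZ_p$ is WOT-closed, it follows that $\fZ_p = B(\H)$. I expect the only genuinely delicate point to be the residue/pigeonhole step, which lets a single fixed length $b$ work simultaneously for infinitely many $k$ and thereby places the corrected operators inside $\fZ_p$; everything afterwards is routine bookkeeping with the Cuntz relation and the density of finite-rank operators.
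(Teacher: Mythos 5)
Your proof is correct and follows essentially the same route as the paper's: both hinge on the pigeonhole observation that some residue of $2^k$ modulo $p$ recurs infinitely often, followed by multiplying Davidson's operators $S_{i,j,k}$ by a fixed word of compensating length and passing to WOT-limits. The only difference is cosmetic --- the paper multiplies on the \emph{right} by a single $Z_w$ and concludes directly via $A = AZ_w^*Z_w$, whereas you multiply on the \emph{left} and must therefore sum over all $2^b$ words $\nu$ (using $\sum_{|\nu|=b} Z_\nu Z_\nu^* = I$) to recover a full orthonormal basis; both are valid.
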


\begin{proof}
As there are finitely many residue classes modulo $p$, there is some $m$ such that $p$ divides $2^k + m$ for infinitely many $k$. Pick a word $w$ with length $|w|=m$, and note that $S_{i,j,k}Z_w \in \Alg \{ \ Z_{\mu} \ | \ \mu \in \mathbb{F}_2^+ \ \text{with} \  |\mu| =p \ \}$ for infinitely many $k$. Thus, the rank one operator $(g_i \otimes h_j^*) Z_w$ is in $\fZ_p$. Since any operator in $B(\H)$ is the WOT limit of finite linear combinations of operators of the form $g_i \otimes h_j^*$, we see that $A = AZ^*_w Z_w \in \fZ_p$ for any $A \in B(\H)$. Hence, $\fZ_p = B(\H)$.
\end{proof}

Next we reduce the problem of showing that $B(\H)$ is a free semigroupoid algebra for a transitive graph $G$ to a problem about vertex corners.

\begin{lemma} \label{l:cornerB(H)}
Let $G$ be a transitive graph. Suppose $\rS$ is a TCK family on $\H$ such that for any $v \in V$ there exists $w\in V$ such that $S_v \fS S_w = S_v B(\H) S_w$. Then $\fS = B(\H)$.
\end{lemma}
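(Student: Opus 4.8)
The plan is to prove that every corner is \emph{full}, meaning $S_v \fS S_w = S_v B(\H) S_w$ for \emph{all} $v,w \in V$, and then to reassemble these corners into $B(\H)$. The hypothesis gives us, for each $v$, a single good target $w = w(v)$; the real task is to spread fullness of the $v$-corner from $w(v)$ to every other vertex, and transitivity is the tool for this. I would first record the bookkeeping: since $\rS$ is a TCK family, every path word $\lambda$ yields $S_\lambda \in \fS$ with $S_\lambda^* S_\lambda = S_{s(\lambda)}$ and $S_\lambda S_\lambda^* \le S_{r(\lambda)}$, and hence $S_{r(\lambda)} S_\lambda = S_\lambda$ and $S_\lambda S_{s(\lambda)} = S_\lambda$. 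Since $G$ is transitive and $\H \neq 0$ forces some $S_v \neq 0$, the family is fully supported (as noted in the text), so every $S_v$ is a genuine nonzero projection and $\H = \bigoplus_v S_v\H$.

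The crux is a propagation step. Fix $v$, suppose the corner $S_v \fS S_w = S_v B(\H) S_w$ is full, and let $\lambda$ be any path with $r(\lambda) = w$. Right-multiplying the corner identity by $S_\lambda$ and using $S_w S_\lambda = S_\lambda$, I claim
\[
S_v \fS S_{s(\lambda)} \;\supseteq\; (S_v \fS S_w)\,S_\lambda \;=\; (S_v B(\H) S_w)\,S_\lambda \;=\; S_v B(\H) S_{s(\lambda)} .
\]
The first inclusion holds because for $y \in \fS$ one has $S_v y S_w S_\lambda = S_v(y S_\lambda) S_{s(\lambda)} \in S_v \fS S_{s(\lambda)}$, and the middle equality is just right-multiplication of equal sets. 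The last equality is the only substantive point: for $A \in B(\H)$ one has $S_v A S_\lambda = S_v (A S_\lambda) S_{s(\lambda)} \in S_v B(\H) S_{s(\lambda)}$, while conversely any $B = S_v B S_{s(\lambda)}$ equals $S_v(B S_\lambda^*) S_\lambda$ since $S_\lambda^* S_\lambda = S_{s(\lambda)}$, so the map $A \mapsto S_v A S_\lambda$ already exhausts $S_v B(\H) S_{s(\lambda)}$. Thus $S_v \fS S_{s(\lambda)}$ is full too. Now transitivity provides, for every target $w'$, a path $\lambda$ with $r(\lambda) = w(v)$ and $s(\lambda) = w'$; feeding this into the propagation step, starting from the given full corner $S_v \fS S_{w(v)}$, shows $S_v \fS S_{w'}$ is full for every $w'$. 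As $v$ was arbitrary, all corners are full.

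Finally I would assemble. For $A \in B(\H)$ and finite $F, F' \subseteq V$, the operator $\big(\sum_{v \in F} S_v\big) A \big(\sum_{w \in F'} S_w\big) = \sum_{v \in F,\, w \in F'} S_v A S_w$ lies in $\fS$, since each summand lies in a full corner $S_v B(\H) S_w \subseteq \fS$. By Hypothesis \ref{h:1} the partial sums $\sum_{v \in F} S_v$ converge SOT, hence WOT, to $I$, so these finite sums converge WOT to $A$; as $\fS$ is WOT-closed we conclude $A \in \fS$, i.e. $\fS = B(\H)$.

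The main obstacle is precisely the asymmetry forced by non-self-adjointness: $S_\lambda \in \fS$ but $S_\lambda^* \notin \fS$ in general, so I may only multiply corners on the right by forward path-isometries, which shrinks the source vertex from $w = r(\lambda)$ to $s(\lambda)$. The identity $S_v B(\H) S_\lambda = S_v B(\H) S_{s(\lambda)}$ is what makes this one-directional move lossless, and transitivity is what makes it sufficient to reach every vertex; keeping the range-versus-source bookkeeping straight, so that the available paths point the correct way, is the one place that requires care.
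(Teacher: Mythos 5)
Your proof is correct and takes essentially the same route as the paper's: you right-multiply the given full corner $S_v\fS S_{w(v)}$ by a path operator $S_\lambda$ and use the substitution $B = AS_\lambda^*$ together with $S_\lambda^*S_\lambda = S_{s(\lambda)}$ to propagate fullness to every corner via transitivity, then assemble using $\sotsum_{v\in V} S_v = I_{\H}$. The only difference is cosmetic: you spell out the final WOT-convergence of the finite partial sums, which the paper leaves implicit.
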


\begin{proof}
Let $v',w' \in V$ be arbitrary vertices. By assumption, there is $w \in V$ such that $S_{v'}\fS S_w = S_{v'}B(\H)S_w$. Let $\lambda$ be a path from $w'$ to $w$, and let $B \in B(\H)$. Then $S_{v'}BS_{\lambda} \in S_{v'} \fS S_{w'}$ for any $B\in B(\H)$. Let $B = A S_{\lambda}^*$ for general $A\in B(\H)$ so that $S_{v'}BS_{\lambda} = S_{v'} A S_{w'} \in S_{v'} \fS S_{w'}$. Hence, we obtain that $S_{v'} \fS S_{w'} = S_{v'} B(\H) S_{w'}$ for any $v',w' \in V$. Since $\sotsum_{v\in V} S_v = I_{\H}$ we get that $\fS = B(\H)$.
\end{proof}

Let $G = (V,E)$ be a finite directed graph. For an edge $e \in E$ we define the \emph{edge contraction} $G/e$ of $G$ by $e$ to be the graph obtained by removing the edge $e$ and identifying the vertices $s(e)$ and $r(e)$. When $r(e) \neq s(e)$, our convention will be that the identification of $r(e)$ and $s(e)$ is carried out by removing the vertex $r(e)$, and every edge with source / range $r(e)$ is changed to have source / range $s(e)$ respectively.
 
  \begin{lemma}\label{l:edge-cont}
  Let $G = (V,E)$ be a transitive and finite directed graph which is not a cycle. Let $e_0 \in E$ such that $r(e_0)$ has in-degree $1$. Then:
  \begin{enumerate}
  \item The edge $e_0$ is not a loop.
  \item The edge contraction $G/e_0$ has one fewer vertex of in-degree $1$ than $G$ has.
  \item The edge contraction $G/e_0$ is a finite transitive directed graph which is not a cycle.
  \end{enumerate}
  \end{lemma}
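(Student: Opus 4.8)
The plan is to establish the three claims in order, writing $a:=s(e_0)$ and $b:=r(e_0)$ and using throughout that, since $|r^{-1}(b)|=1$, the edge $e_0$ is the unique edge with range $b$.

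For (1) I would argue by contradiction. Suppose $e_0$ is a loop, so $a=b$ and the loop $e_0$ is the only edge into $a$. If $|V|=1$ then $G$ is the single vertex with one loop, i.e. the $1$-cycle, contrary to hypothesis; so there is a vertex $w\neq a$, and by transitivity a directed path from $w$ to $a$ of minimal length. Its last edge has range $a$ and hence equals $e_0$, whose source is $a$; thus the path already visits $a$ one step earlier, contradicting minimality. Therefore $e_0$ is not a loop, $a\neq b$, and in particular $|V|\geq 2$.

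The heart of the argument is a bookkeeping observation used for both (2) and (3): edge contraction by $e_0$ preserves the in-degree of every surviving vertex. Indeed, the contraction only deletes $e_0$ and reroutes the source of each edge emanating from $b$ to $a$; rerouting a source never changes an edge's range, and the only edge whose range could be rerouted (an edge with range $b$) is $e_0$ itself, which is deleted. Hence for every $u\in V\setminus\{b\}$ the set of edges of $G/e_0$ with range $u$ equals the set of edges of $G$ with range $u$ (with some sources possibly moved from $b$ to $a$), so $|r^{-1}(u)|$ is unchanged; this is exactly where $|r^{-1}(b)|=1$ is essential, as it prevents any extra edge from being folded into the range of $a$. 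Consequently the multiset of in-degrees of $G/e_0$ is that of $G$ with the single entry for $b$ (equal to $1$) removed, which proves (2).

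For (3), finiteness is clear. Transitivity I would deduce from the quotient map $\pi:V\to V\setminus\{b\}$ that fixes every vertex except $b\mapsto a$: each edge $f\neq e_0$ descends to an edge of $G/e_0$ from $\pi(s(f))$ to $\pi(r(f))$, so pushing a directed path of $G$ from $x$ to $y$ (with $x,y\neq b$) through $\pi$ and dropping each traversal of $e_0$ (which collapses to a stay at $a$) yields a directed path from $x$ to $y$ in $G/e_0$; equivalently, identifying two vertices of a transitive digraph and then deleting the resulting loop keeps it transitive. Finally, to see $G/e_0$ is not a cycle I argue contrapositively using the in-degree computation above: were $G/e_0$ a cycle, every vertex of $G/e_0$, hence every vertex of $V\setminus\{b\}$ in $G$, would have in-degree $1$; together with $|r^{-1}(b)|=1$ this makes all in-degrees of $G$ equal to $1$, so $|E|=|V|$. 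A finite transitive digraph with $|E|=|V|$ has all out-degrees equal to $1$ as well (each is at least $1$ and they sum to $|E|=|V|$), hence is a single directed cycle, contradicting that $G$ is not a cycle. The main obstacle is the in-degree bookkeeping of the previous paragraph; once the in-degree of $a$ is seen to be unchanged, both (2) and the non-cycle half of (3) follow cleanly.
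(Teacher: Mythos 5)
Your proposal is correct and follows essentially the same route as the paper: both proofs hinge on the observation that contracting an edge into an in-degree-$1$ vertex leaves the in-degree of every surviving vertex unchanged, and both use the characterization of cycles among finite transitive digraphs as those with all in-degrees equal to $1$ (you run this step contrapositively and supply the out-degree counting argument for the characterization, which the paper simply asserts). The extra details you give for (1) and for transitivity of $G/e_0$ are fine elaborations of what the paper leaves to the reader.
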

	
  \begin{proof}
  If $e_0$ were a loop, then the assumptions that $G$ is transitive and that $r(e_0)$ has in-degree $1$ would imply that $G$ is a cycle with one vertex. This contradicts our assumption that $G$ is not a cycle, so (i) is proved.
  
  Since now we must have $r(e_0) \neq s(e_0)$, we adopt our convention discussed above. Since $r(e_0)$ has in-degree $1$, there are no other edges whose range is $r(e_0)$, so contraction does not change the range of any of the surviving edges. In particular, the in-degree of the remaining vertices is unchanged. Since we have removed a single vertex of in-degree $1$, (ii) is proved.
  
By construction $G/e_0 $ is a finite directed graph. It is straightforward to verify that transitivity of $G$ implies transitivity of $G/e_0$. A finite, transitive, directed graph is a cycle if and only if every vertex has in-degree $1$. Since $G$ is by assumption not a cycle, it has a vertex of in-degree at least $2$. Since the in-degree of the remaining vertices is unchanged, $G/e_0$ also has a vertex of in-degree at least $2$. Hence $G/e_0$ is not a cycle and (iii) is proved.
  \end{proof}
	
The following proposition shows that such edge contractions preserve the property of having $B(\H)$ as a free semigroupoid algebra.

  \begin{proposition} \label{p:edge-cont-FSA}
Let $G = (V,E)$ be a transitive and finite directed graph that is not a cycle, and let $e_0 \in E$ such that $ r(e_0)$ has in-degree $1$. If $G/e_0$ has $B(\H)$ as a free semigroupoid algebra, then so does $G$.
\end{proposition}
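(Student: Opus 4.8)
The plan is to lift a generating Cuntz--Krieger family on $G/e_0$ to one on $G$ by ``splitting apart'' the contracted vertex. Write $u := s(e_0)$ and $w := r(e_0)$; these are distinct by Lemma~\ref{l:edge-cont}(i), and $w$ has in-degree $1$ with $r^{-1}(w) = \{e_0\}$. Recall that the vertices of $G/e_0$ are $V \setminus \{w\}$ and its edges are $E \setminus \{e_0\}$, where an edge $f$ with $s_G(f) = w$ is reassigned the source $u$. Fix a CK family $\rS' = (S'_v, S'_f)$ on $\H$ for $G/e_0$ with free semigroupoid algebra $\fS' = B(\H)$; since $G/e_0$ is transitive (Lemma~\ref{l:edge-cont}(iii)) and $\rS'$ is non-degenerate (Hypothesis~\ref{h:1}), it is fully supported, so $S'_u \neq 0$.

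First I would build the family on the enlarged space $\K := \H \oplus \L$, where $\L$ is a copy of the range of $S'_u$ and $U : S'_u\H \to \L$ is a fixed unitary. Set $S_v := S'_v \oplus 0$ for every $v \neq w$ and $S_w := 0 \oplus I_\L$; these are pairwise orthogonal projections summing to $I_\K$. Let $S_{e_0}$ be the partial isometry $\xi \oplus \eta \mapsto 0 \oplus U S'_u \xi$, so that $(S_{e_0})^* S_{e_0} = S_u$ and $S_{e_0}(S_{e_0})^* = S_w$, as required for the unique edge into $w$. For an edge $f \neq e_0$ with $s_G(f) \neq w$ set $S_f := S'_f \oplus 0$, and for $f$ with $s_G(f) = w$ set $S_f := (S'_f \oplus 0)\,(S_{e_0})^*$. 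A direct computation using $(S_{e_0})^* S_{e_0} = S_u$ and $(S'_f)^* S'_f = S'_u$ shows $S_f^* S_f = S_{s_G(f)}$ in every case, and that the range projection $S_f S_f^* = S'_f (S'_f)^* \oplus 0$ is unaffected by the reassignment of sources; summing over $r^{-1}(v)$ and invoking the CK relation for $\rS'$ then verifies that $\rS = (S_v, S_e)$ is a CK family for $G$.

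The key step is to show $\fS = B(\K)$. The identity $S_f S_{e_0} = (S'_f \oplus 0)(S_{e_0})^* S_{e_0} = S'_f \oplus 0$ (and $S_f = S'_f \oplus 0$ already when $s_G(f) \neq w$) shows that every generator $S'_f \oplus 0$ of $\fS'$, as well as every $S'_v \oplus 0 = S_v$, lies in $\fS$. As these operators are supported on the WOT-closed corner $(I_\H \oplus 0)\,B(\K)\,(I_\H \oplus 0) \cong B(\H)$ and generate $\fS' = B(\H)$ there, we obtain $B(\H) \oplus 0 \subseteq \fS$. I would then finish with Lemma~\ref{l:cornerB(H)}: for any vertex $v \neq w$ we have $S_v \fS S_v \supseteq S_v (B(\H)\oplus 0) S_v = S_v B(\K) S_v$, hence equality; and for $v = w$, taking $w' = u$ gives $S_w \fS S_u \supseteq S_{e_0}(B(\H)\oplus 0) S_u = S_w B(\K) S_u$, since $S_{e_0}$ restricts to the unitary $U$ from $S_u\K$ onto $S_w\K$. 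By Lemma~\ref{l:cornerB(H)}, $\fS = B(\K)$, and $\K$ is again separable and infinite dimensional.

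I expect the main subtlety to lie in correctly inverting the source reassignment built into the contraction. Each edge $f$ with $s_G(f) = w$ appears in $G/e_0$ as an edge with source $u$, so $S'_f$ has initial space $S'_u\H$; the definition $S_f = (S'_f \oplus 0)(S_{e_0})^*$ re-sources it to $w$ while keeping its range projection $S_f S_f^* = S'_f (S'_f)^* \oplus 0$ intact, which is what makes the CK relation survive at the affected range vertices. The other new ingredient is the corner at the reintroduced vertex $w$, which has no analogue in $G/e_0$; here one uses that the in-degree-$1$ condition forces $S_{e_0}$ to be a partial isometry identifying $S_u\K$ with $S_w\K$, so that $w$ carries only a unitary copy of the corner at $u$ and contributes no genuinely new operatorial data. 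Everything else is routine verification of the TCK/CK relations.
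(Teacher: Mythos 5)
Your proof is correct, and the underlying construction is the same as the paper's: you enlarge the Hilbert space by a unitary copy of the corner at $u=s(e_0)$, let $S_{e_0}$ act as (essentially) that unitary onto the new summand, and re-source each edge $f$ leaving $w=r(e_0)$ by composing with $S_{e_0}^*$ --- this is exactly the paper's family under the identification $U=J^*$. Where you genuinely diverge is in the fullness argument. The paper identifies each diagonal corner $S_v \fS S_v$ with the WOT-closed span of cycle operators at $v$ and then decomposes arbitrary cycles through $r(e_0)$ into simple ones to match them with cycles of $G/e_0$; you instead use the single identity $S_f S_{e_0} = S'_f \oplus 0$ (together with $S_f = S'_f\oplus 0$ when $s_G(f)\neq w$) to place every generator of $\fS'$ inside $\fS$ at once, giving $B(\H)\oplus 0 = \fS'\oplus 0 \subseteq \fS$ directly and eliminating the cycle bookkeeping. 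Your treatment of the new vertex via the off-diagonal corner $S_w \fS S_u$ (rather than the diagonal corner through a fixed cycle $\mu$ at $v_0$, as in the paper) is a legitimate use of Lemma \ref{l:cornerB(H)}, which only requires one full corner $S_v \fS S_{w'}$ for each $v$. The one step worth spelling out is that the WOT-closed algebra generated in $B(\K)$ by the operators $S'_f\oplus 0$ and $S'_v\oplus 0$ is precisely $\fS'\oplus 0$; this follows because compression by the WOT-continuous map $X\mapsto (I_\H\oplus 0)X(I_\H\oplus 0)$ fixes these generators, so the argument is complete. Net effect: same reduction and same appeal to Lemma \ref{l:cornerB(H)}, but a cleaner and shorter verification that the lifted algebra is everything.
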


\begin{proof}
Let $v_0 := r(e_0)$ so that  $G/e_0 = (\tilde{V}, \tilde{E}) = (V \setminus \{v_0\}, E \setminus \{ e_0\})$. 
Let $\widetilde{\rS} = (\widetilde{S}_{v}, \widetilde{S}_{e})$ be a TCK family for $G/e$ on $\H$ such that $\widetilde{\fS} = B(\H)$. By Theorem \ref{t:sa-rest} we get that $\widetilde{\rS}$ is actually a CK family. Write $\H = \bigoplus_{v  \in \tilde{V}} \widetilde{S}_{v} \H$ and let $\H_v = \widetilde{S}_{v} \H$ for $v \in \tilde{V}$. Let $\H_{v_0}$ be a Hilbert space identified with $\widetilde{S}_{s(e_0)}\H$ via a fixed unitary identification $J : \H_{v_0} \rightarrow \widetilde{S}_{{s(e_0)}}\H$ and form the space $\K = \bigoplus_{v\in V} \H_v$. We define a CK family $\rS$ for $G$ on $\K$ as follows: Let $S_v$ be the projection onto $\H_v$ for each $v\in V$. For edges $e \in \tilde{E}$ with $s(e)\neq v_0$ we extend linearly the rule 
$$
S_e \xi = \begin{cases} 
\widetilde{S}_{e} \xi & \text{ when }  \xi \in \H_{s(e)}, \\
0 & \text{ when }  \xi \in \H_{s(e)}^{\perp},
\end{cases}
$$
for edges $e\in \tilde{E}$ with $s(e) = v_0$ we extend linearly the rule
$$
S_e \xi = \begin{cases} 
\widetilde{S}_{e}J \xi & \text{ when }  \xi \in H_{v_0}, \\
0 & \text{ when }  \xi \in H_{v_0}^{\perp},
\end{cases}
$$
and finally for $e_0$ we extend linearly the rule
$$
S_{e_0} \xi = \begin{cases} 
J^* \xi & \text{ when }  \xi \in \H_{s(e_0)}, \\
0 & \text{ when }  \xi \in \H_{s(e_0)}^{\perp}.
\end{cases}
$$ 
Since $J$ is a unitary, and since $\widetilde{\rS}$ is a CK family for $\widetilde{G}$, we see that $\rS = (S_v,S_e)$ is a CK family for $G$. 

We verify that for any vertex $v \in V$ we have $S_v \fS S_v = S_v B(\K) S_v$. First note that for any $v \in V$ we have 
$$
S_v \fS S_v = \overline{\Span}^{\wot}\{ \  S_{\lambda} \ | \  r(\lambda) = v = s(\lambda), \ \lambda \in E^{\bullet} \ \},
$$
and similarly for every $v\in \tilde{V}$ we have a description as above for $\widetilde{S}_v \widetilde{\fS} \widetilde{S}_v$ in terms of cycles in $G / e_0$.

Suppose now that $v \in \tilde{V}$ and that $\lambda$ is a cycle through $v$ in $G$. If $\lambda$ does not go through $v_0$, then $S_v S_{\lambda} S_v = S_v \widetilde{S}_{\lambda} S_v \in S_v \widetilde{\fS} S_v$. Next, if $\lambda = \mu \nu$ is a simple cycle such that $s(\mu) = v_0$, since $e_0$ is the unique edge with $r(e_0) = v_0$, we may write $\nu = e_0 \nu'$. We then get that
$$
S_v S_{\lambda} S_v = S_v S_{\mu}J^* S_{\nu'} S_v = S_v \widetilde{S}_{\mu} \widetilde{S}_{\nu'} S_v \in S_v \widetilde{\fS} S_v.
$$
For a general cycle $\lambda$ around $v$ which goes through $v_0$, we may decompose it as a concatenation of simple cycles, and apply the above iteratively to eventually get that $S_v S_{\lambda} S_v \in S_v\widetilde{\fS} S_v$. Hence, for $v \in \tilde{V}$ we have  
\begin{equation*}
S_{v} \fS S_{v} = S_{v} \widetilde{\fS} S_{v} = S_{v} B(\H)S_{v} = S_v B(\K) S_v.
\end{equation*}
Finally, for $v = v_0$ fix $\mu$ some cycle going through $v_0$ in $G$, and write $\mu =e_0 \mu'$. For any $\lambda$ which is a cycle in $G$ going through $s(e_0)$, we have that 
$$
J^*S_{\lambda}JS_{\mu} = S_{e_0} S_{\lambda} S_{\mu'} \in S_{v_0} \fS S_{v_0}.
$$ 
Hence, from our previous argument applied to $s(e_0) \neq v_0$, we get
\begin{equation} \label{eq:supset-cycle}
J^* S_{s(e_0)} B(\H) S_{s(e_0)} J S_{\mu} = J^* S_{s(e_0)} \fS S_{s(e_0)} J S_{\mu} \subseteq S_{v_0} \fS S_{v_0}.
\end{equation}
Next, for $B\in S_{v_0} B(\K) S_{v_0}$ we take $A= S_{s(e_0)} J B S_{\mu}^*J^* S_{s(e_0)}$ which is now in $S_{s(e_0)} B(\H) S_{s(e_0)}$, so that 
$$
S_{s(e_0)} J B S_{v_0} = A S_{s(e_0)} J S_{\mu} \in S_{s(e_0)} B(\H) S_{s(e_0)} J S_{\mu}.
$$
By varying over all $B\in S_{v_0} B(\K) S_{v_0}$ and multiplying by $J^*$ on the left, we get that
\begin{equation*}
S_{v_0} B(\K) S_{v_0} \subseteq J^* S_{s(e_0)} B(\H) S_{s(e_0)} J S_{\mu}.
\end{equation*}
Thus, with equation \eqref{eq:supset-cycle} we get that $S_{v_0} B(\K)S_{v_0} = S_{v_0} \fS S_{v_0}$. Now, since for any $v\in V$ we have that $S_{v} \fS S_{v} = S_v B(\K) S_v$, by Lemma \ref{l:cornerB(H)} we are done.
\end{proof}

Hence, edge contraction together with Lemma \ref{l:edge-cont} can be repeatedly applied to any finite transitive directed graph $G$ which is not a cycle in order to obtain another such graph $\widetilde{G}$ which has in-degree at least $2$ for every vertex. By applying Proposition \ref{p:edge-cont-FSA} to this procedure, we obtain the following corollary.

\begin{corollary} \label{c:in-deg-2}
Let $G$ be a transitive and finite directed graph which is not a cycle, and let $\widetilde{G}$ be a graph resulting from repeatedly applying edge contractions to edges $e\in E$ with $r(e)$ of in-degree $1$. Then $\widetilde{G}$ has in-degree at least $2$ for every vertex, and if $\widetilde{G}$ has $B(\H)$ as a free semigroupoid algebra, then so does $G$.
\end{corollary}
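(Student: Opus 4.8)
The plan is to prove both assertions at once by induction on the number $n$ of vertices of $G$ with in-degree $1$. The two ingredients are already in hand: Lemma \ref{l:edge-cont} tells us that contracting an edge $e_0$ whose range $r(e_0)$ has in-degree $1$ keeps us inside the class of finite transitive non-cycle graphs (part (iii)) while decreasing the number of in-degree-$1$ vertices by exactly one (part (ii)); and Proposition \ref{p:edge-cont-FSA} carries the property ``$B(\H)$ is a free semigroupoid algebra'' backwards across such a contraction. The corollary should then follow by iterating the contraction until no in-degree-$1$ vertex remains and chaining the implications supplied by the Proposition in reverse.

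For the claim about in-degrees, I would first note that in any finite transitive graph every vertex has in-degree at least $1$, since transitivity provides a path into each vertex (equivalently, each vertex lies on a cycle, as in the remark following the definition of the cyclic decomposition). Hence a vertex fails to have in-degree $\geq 2$ precisely when it has in-degree exactly $1$. Now run the contraction process: at each stage Lemma \ref{l:edge-cont}(iii) keeps the graph finite, transitive and not a cycle, so there is always a legitimate edge to contract as long as an in-degree-$1$ vertex exists, and Lemma \ref{l:edge-cont}(ii) shows the count of such vertices drops by one each time. After exactly $n$ contractions we reach $\widetilde{G}$ with no in-degree-$1$ vertices; combined with the in-degree $\geq 1$ observation, every vertex of $\widetilde{G}$ has in-degree at least $2$.

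For the implication about free semigroupoid algebras, write the process as a chain $G = G_0,\ G_1 = G_0/e_0,\ \dots,\ G_n = \widetilde{G}$, where each $e_k$ is an edge of $G_k$ into an in-degree-$1$ vertex. Each $G_k$ is finite, transitive and not a cycle by Lemma \ref{l:edge-cont}(iii), so the hypotheses of Proposition \ref{p:edge-cont-FSA} are satisfied at every step. Assuming $\widetilde{G} = G_n$ has $B(\H)$ as a free semigroupoid algebra, Proposition \ref{p:edge-cont-FSA} applied to $G_{n-1}$ and $e_{n-1}$ gives that $G_{n-1}$ does too; iterating backwards through $G_{n-2}, \dots, G_0$ yields that $G = G_0$ has $B(\H)$ as a free semigroupoid algebra.

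I expect no serious obstacle: the content is entirely carried by the two previous results, and the only points needing care are bookkeeping ones — checking that the hypotheses of Lemma \ref{l:edge-cont} and Proposition \ref{p:edge-cont-FSA} persist at every stage (which is exactly what Lemma \ref{l:edge-cont}(iii) provides) and that the terminal graph genuinely has all in-degrees $\geq 2$ rather than merely no in-degree-$1$ vertices (which is where the transitivity-forces-in-degree-$\geq 1$ observation enters). Termination is automatic, since Lemma \ref{l:edge-cont}(ii) forces a strict decrease of a nonnegative integer.
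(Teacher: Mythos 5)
Your argument is correct and is essentially the paper's own: the corollary is obtained by iterating Lemma \ref{l:edge-cont} (which keeps the graph finite, transitive and non-cycle while strictly decreasing the number of in-degree-$1$ vertices) and then chaining Proposition \ref{p:edge-cont-FSA} backwards along the resulting sequence of contractions. Your extra remark that transitivity forces every vertex to have in-degree at least $1$, so that eliminating the in-degree-$1$ vertices yields in-degree at least $2$ everywhere, is a correct piece of bookkeeping that the paper leaves implicit.
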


Let $G = (V,E)$ is finite directed graph which is transitive and has in-degree at least $2$ at every vertex. For a vertex $v\in V$ with in-degree $d_v \geq 3$ we define the \emph{$v$-lag} of $G$ to be the graph $\widehat{G}_v = (\widehat{V}_v, \widehat{E}_v)$ obtained as follows: all vertices beside $v$ and edges ranging in such vertices remain the same. We list $(u_0,...,u_{d_v-1})$ the tuple of vertices in $G$ which are the source of an edge with range $v$, counted with repetition so that there is a unique edge $e_j$ from each $u_j$ to $v$. We add $d_v-2$ vertices $v_1,...,v_{d_v-2}$ and set an edge $f_i$ from $v_i$ to $v_{i-1}$ when $2 \leq i \leq d_v-2$ and an edge $f_1$ from $v_1$ to $v$. We then replace each edge $e_j$ from $u_j$ to $v$ in $G$ with an edge $\hat{e}_j$ from $u_j$ to $v_j$ when $1 \leq j \leq d_v-2$, replace an edge $e_0$ from $u_0$ to $v$ in $G$ by an edge $\hat{e}_0$ from $u_0$ to $v$, and replace an edge $e_{d_v-1}$ from $u_{d_v-1}$ to $v$ in $G$ with an edge $\hat{e}_{d_v-1}$ from $u_{d_v-1}$ to $v_{d_v-2}$. The resulting graph is over the vertex set $\widehat{V}_v = V \sqcup \{v_1,...,v_{d_v-2}\}$ and we denote it by $\widehat{G}_v$. The construction will replace only those edges going into $v$ with those shown in Figure \ref{f:split} (the sources $u_0,...,u_{d_v -1}$ of such edges may have repetitions), and everything else will remain the same.

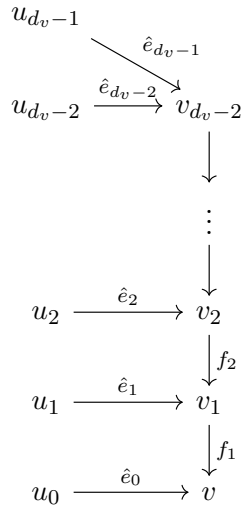
\begin{figure}[H]
  \begin{tikzcd}
  u_{d_v-1} \arrow["\hat{e}_{d_v-1}"]{rd} & \\
   u_{d_v-2} \arrow["\hat{e}_{d_v-2}"]{r} & v_{d_v-2} \arrow[d]\\
  & \vdots  \arrow[d]\\
  u_2 \arrow["\hat{e}_2"]{r} &  v_2  \arrow["f_2"]{d} \\
   u_1 \arrow["\hat{e}_1"]{r} &  v_1 \arrow["f_1"]{d}  \\
    u_0  \arrow["\hat{e}_0"]{r} & v     
  \end{tikzcd}
	\caption{The lag of $G$ applied at $v$.} \label{f:split}
\end{figure} 

\begin{lemma} \label{l:transitive-less-in-deg-3}
Let $G=(V,E)$ be a transitive and finite directed graph with in-degree at least $2$ at every vertex. Let $v \in V$ be a vertex with in-degree $d_v \geq 3$. Then $\widehat{G}_v$ is a finite transitive directed graph with in-degree at least $2$ at every vertex, and has one fewer vertex of in-degree at least $3$.
\end{lemma}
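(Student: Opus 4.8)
The plan is to verify finiteness and the two in-degree assertions by direct bookkeeping, and to reserve the real work for transitivity, which is where all the content lies. Finiteness is immediate, since the lag adjoins only the $d_v - 2$ new vertices $v_1, \dots, v_{d_v-2}$ together with finitely many edges.

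For the in-degree statements, the key observation is that the construction alters only edges whose range is $v$: every new edge ($\hat{e}_j$ or $f_i$) ranges in one of $v, v_1, \dots, v_{d_v-2}$, and the only deleted edges are the $e_j$ ranging in $v$. Hence the in-degree of every old vertex $w \neq v$ is unchanged, so such $w$ retains in-degree at least $2$ and its membership in the set of vertices of in-degree $\geq 3$ is unaffected. Reading off Figure~\ref{f:split}, the vertex $v$ now receives exactly $\hat{e}_0$ and $f_1$, so its in-degree drops to $2$; each $v_i$ with $1 \le i \le d_v-3$ receives exactly $\hat{e}_i$ and $f_{i+1}$, while the top vertex $v_{d_v-2}$ receives exactly $\hat{e}_{d_v-2}$ and $\hat{e}_{d_v-1}$, so every new vertex also has in-degree exactly $2$. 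This shows both that in-degree at least $2$ holds throughout and that the set of vertices of in-degree $\geq 3$ loses precisely $v$ and gains nothing, i.e.\ $\widehat{G}_v$ has one fewer such vertex.

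The crux is transitivity, and the device I would use is a path-lifting map sending directed walks in $G$ to directed walks in $\widehat{G}_v$. For each edge $e_j$ of $G$ with range $v$ and source $u_j$ there is a distinguished directed path in $\widehat{G}_v$ with the same endpoints: for $j=0$ the single edge $\hat{e}_0$; for $1 \le j \le d_v-2$ the path $\hat{e}_j f_j f_{j-1} \cdots f_1$ through $v_j, \dots, v_1$; and for $j = d_v-1$ the path $\hat{e}_{d_v-1} f_{d_v-2} \cdots f_1$ through $v_{d_v-2}, \dots, v_1$. Every edge of $G$ not ranging in $v$ survives unchanged in $\widehat{G}_v$ with the same source and range, and all old vertices persist. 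Replacing each edge of a directed walk in $G$ by its designated replacement (and leaving the unchanged edges alone) therefore yields a directed walk in $\widehat{G}_v$ with the same endpoints; by transitivity of $G$ this connects any two old vertices.

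Finally I would dispose of the new vertices by a short case analysis resting on two facts: from any $v_i$ the chain $f_i \cdots f_1$ reaches the old vertex $v$, and each $v_i$ is reached from the old vertex $u_i$ via the single edge $\hat{e}_i$. Splicing these onto the old-to-old connectivity just established covers all four cases (old to old, old to new, new to old, new to new), proving transitivity. The main obstacle is essentially organizational rather than deep: ensuring the replacement paths are endpoint-aligned so that the lifted object is genuinely a walk—in particular that a $G$-walk passing through $v$ (entering via some $e_j$ and leaving along an out-edge of $v$) lifts correctly, including the degenerate case of a loop at $v$, which is redirected by the construction but still preserves its endpoints.
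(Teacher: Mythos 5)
Your proof is correct and takes essentially the same route as the paper's: the in-degree claims are settled by the same direct bookkeeping of which edges range at $v$ and at the new vertices $v_1,\dots,v_{d_v-2}$, and transitivity is proved by the same path-lifting device, replacing each edge $e_j$ into $v$ by its designated path through the new vertices and reducing the new-vertex cases to old-to-old connectivity via the chain of $f_i$'s and the edges $\hat{e}_i$. No gaps.
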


\begin{proof}
We see from the construction that $v_1,..., v_{d_v-2}$, as well as $v$, all have in-degree $2$. So we have reduced by one the number of vertices of in-degree at least $3$, and all other vertices except for $v_1,...,v_{d_v -2}$ and $v$ still have the same in-degree. Hence, $\widehat{G}_v$ has one fewer vertex with in-degree at least $3$.

We next show that $\widehat{G}_v$ is transitive. Indeed, every one of the vertices $\{v_1,..., v_{d_v-2}\}$ leads to $v$, and the set of vertices $\{u_1,...,u_{d_v-1}\}$ lead to $v_1,...,v_{d_v-2}$ so it would suffice to show that for any two vertices $w,u \in V$ we have a path in $\widehat{G}_v$ from $w$ to $u$. Since $G$ is transitive, we have a path $\lambda = g_1...g_n$  in $G$ from $w$ to $u$. Next, define $\hat{g}_k$ to be $g_k$ if $g_k \neq e_j$ for all $j$ and whenever $g_k = e_j$ for some edge $e_j$ with range in $v$ we set $g_k:= f_1...f_j\hat{e}_j$ which is a path in $\widehat{G}_v$ from $u_j$ to $v$. This way the new path $\hat{g} = \hat{g}_1....\hat{g}_n$ is a path in $\widehat{G}_v$ from $w$ to $u$.
\end{proof}

Our construction depends on the choice of orderings for the $\{u_j\}$, so when we write $\widehat{G}_v$ we mean a fixed ordering for sources of incoming edges of the vertex $v$ in the above construction.

Next, let $G=(V,E)$ be a finite directed graph with in-degree at least $2$ at every vertex. For a vertex of in-degree at least $3$, let $\widehat{G}_v$ be the $v$-lag of $G$. We define a map $\theta$ on paths $E^{\bullet}$ of $G$ as follows: If $e\in E$ is any edge with $r(e) \neq v$, we define $\theta(e) = e$. Next, if $e_j$ is the unique edge from $u_j$ to $v$ in $G$ we define $\theta(e_j) = f_1 f_2 ... f_j \hat{e}_j$ which is a path from $u_j$ to $v$ in $\widehat{G}_v$. The map $\theta$ then extends to a map (denoted still by) $\theta$ on finite paths $E^{\bullet}$ of $G$ by concatenation, whose restriction to $V$ is the embedding of $V \subseteq \widehat{V}_v$.

\begin{lemma} \label{l:bijection}
Let $G=(V,E)$ be a transitive and finite directed graph with in-degree at least $2$ at every vertex. For a vertex $v\in V$ of in-degree at least $3$, let $\widehat{G}_v$ be the $v$-lag of $G$. Then $\theta$ is a bijection between $E^{\bullet}$ and paths in $\widehat{E}_v^{\bullet}$ whose range and source are both in $V$.
\end{lemma}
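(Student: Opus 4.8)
The plan is to exhibit an explicit inverse to $\theta$ by a ``unique decoding'' argument, resting on one structural observation about the lag construction: each new vertex $v_i$ has out-degree exactly $1$, its only outgoing edge being $f_i$ (to $v_{i-1}$, or to $v$ when $i=1$), while the edges ranging into $v_i$ are precisely $\hat{e}_i$ and $f_{i+1}$, and the modified edges ranging into $v$ are precisely $\hat{e}_0$ and $f_1$. Everything will follow from pushing on this fact.

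First I would check that $\theta$ maps $E^{\bullet}$ into the stated codomain. For a single edge $a \in E$ one computes directly that $\theta(a)$ is a path in $\widehat{G}_v$ with $r(\theta(a)) = r(a)$ and $s(\theta(a)) = s(a)$: this is immediate when $r(a) \neq v$, and for $a = e_j$ it is the content of the identities $r(f_1 \cdots f_j \hat{e}_j) = v$ and $s(f_1 \cdots f_j \hat{e}_j) = u_j$ recorded in the construction. Since these ranges and sources lie in $V$, and since $\theta$ is extended by concatenation, for a path $\lambda = a_1 \cdots a_m$ the blocks $\theta(a_i)$ splice together (as $s(a_i) = r(a_{i+1})$ in $G$ becomes $s(\theta(a_i)) = r(\theta(a_{i+1}))$ in $\widehat{G}_v$), so $\theta(\lambda)$ is a genuine path with both endpoints $r(\lambda), s(\lambda) \in V$.

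The heart of the argument is to define a decoding map $\psi$ on paths of $\widehat{G}_v$ with both endpoints in $V$ and to show it inverts $\theta$. Given such a path $\widehat{\lambda} = g_1 \cdots g_N$, I would read it from its range end and segment it greedily into blocks. At the start of each block the current edge $g_k$ has its range in $V$ (this holds for $g_1$ by hypothesis, and propagates since every block is seen to have source in $V$). If $r(g_k) \neq v$ then $g_k$ is necessarily an unmodified edge of $G$ and forms a block by itself; if $r(g_k) = v$ then $g_k \in \{\hat{e}_0, f_1\}$, and in the first case $\hat{e}_0 = \theta(e_0)$ is a block, while in the second the structural observation forces the path to continue $f_1 f_2 \cdots$ along the unique outgoing edges of the new vertices until it first meets some $\hat{e}_j$, producing the block $f_1 \cdots f_j \hat{e}_j = \theta(e_j)$. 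The crucial finiteness point is that this tail must close: a path cannot terminate at a new vertex, since its source endpoint lies in $V$ whereas $s(f_i) = v_i \notin V$, and the run $f_1 f_2 \cdots$ cannot proceed past $v_{d_v-2}$ for lack of an edge $f_{d_v-1}$. Thus the segmentation is forced at every step, hence unique, and each block is exactly $\theta(a)$ for a uniquely determined $a \in E$.

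Decoding the blocks back to the edges $a$ yields a word $\psi(\widehat{\lambda}) = a_1 \cdots a_m$, which is a path in $G$ because consecutive blocks meeting at a vertex of $V$ translate the splicing condition back to $s(a_i) = r(a_{i+1})$. By construction $\theta(\psi(\widehat{\lambda})) = \widehat{\lambda}$, and conversely applying the forced segmentation to $\theta(\lambda)$ recovers the original blocks, so $\psi(\theta(\lambda)) = \lambda$; together with the length-$0$ case, where $\theta$ restricts to the inclusion $V \hookrightarrow \widehat{V}_v$, this shows $\theta$ and $\psi$ are mutually inverse bijections. I expect the only real obstacle to be the bookkeeping in the tail-closing step --- verifying that a maximal run of $f$-edges in $\widehat{\lambda}$ is always terminated by exactly one $\hat{e}_j$ and therefore matches a unique $\theta(e_j)$ --- which is precisely where the out-degree-$1$ property of the new vertices does all the work.
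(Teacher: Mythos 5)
Your proof is correct and follows essentially the same route as the paper's: both rest on the observation that a path in $\widehat{G}_v$ between vertices of $V$ decomposes at its visits to $V$ into blocks, each of which is forced to be either an unmodified edge or a segment $f_1\cdots f_j\hat{e}_j=\theta(e_j)$. Your packaging of this as an explicit two-sided inverse $\psi$ is in fact slightly more careful than the paper, whose one-line injectivity claim (``injective on edges, hence on paths'') implicitly needs exactly the unique decodability your forced segmentation establishes; the only nitpick is that the in-edges of the top vertex $v_{d_v-2}$ are $\hat{e}_{d_v-2}$ and $\hat{e}_{d_v-1}$ rather than $\hat{e}_{d_v-2}$ and $f_{d_v-1}$, a case your termination argument already handles.
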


\begin{proof}
Since $\theta$ is injective on edges and vertices, it must be injective on paths as it is defined on paths by extension. 

Suppose now that $u \in V$ and $\widehat{\lambda} = \hat{g}_1 ... \hat{g}_j$ is a path in $\widehat{G}_v$ from $u$ to $v$ such that $s(\hat{g}_k) \notin V$ for $k=1,...,j-1$. Then it must be that $\widehat{\lambda} = f_1f_2...f_j\hat{e}_j$ with $s(\hat{e}_j) = u_j = u$, so that $\widehat{\lambda} = \theta(e_j)$. A general path in $\widehat{G}_v$ between two vertices in $V$ is then the concatenation of edges in $E$ and paths of the form $f_1f_2...f_j\hat{e}_j$ as above, so that $\theta$ is surjective. 
\end{proof}

Applying a $v$-lag at each vertex of in-degree $3$ repeatedly until there are no more such vertices, we obtain a directed graph $\widehat{G} = (\widehat{V},\widehat{E})$. Since the construction at each vertex of in-degree at least $3$ changes only edges going into the vertex $v$, the order in which we apply the lags does not matter, and we will get the same directed graph $\widehat{G} = (\widehat{V},\widehat{E})$. The following is a result of applying Lemma \ref{l:transitive-less-in-deg-3} and Lemma \ref{l:bijection} at each step of this process.

\begin{corollary} \label{c:in-deg-reg-2}
Let $G=(V,E)$ be a transitive and finite directed graph with in-degree at least $2$ at every vertex. Let $\widehat{G} = (\widehat{V},\widehat{E})$ be the graph resulting from repeatedly applying a $v$-lag at every vertex $v$ of in-degree at least $3$. Then $\widehat{G}$ is in-degree $2$-regular and there is an embedding $V\subseteq \widehat{V}$ which extends to a bijection $\theta$ from paths $E^{\bullet}$ to paths in $\widehat{E}^{\bullet}$ whose range and source in $V$.
\end{corollary}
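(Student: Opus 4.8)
The plan is to argue by induction on the number $N$ of vertices of $G$ whose in-degree is at least $3$, viewing the construction of $\widehat{G}$ as a finite sequence of single lags $G = G_0, G_1, \ldots, G_N = \widehat{G}$, where $G_{k+1}$ is the $v_k$-lag of $G_k$ at some vertex $v_k$ of in-degree at least $3$ in $G_k$. Write $V_k$ for the vertex set of $G_k$. By Lemma \ref{l:transitive-less-in-deg-3}, each lag keeps the graph finite and transitive with in-degree at least $2$ everywhere, strictly decreases the count of in-degree $\geq 3$ vertices by one, and only introduces new vertices of in-degree exactly $2$. Hence after exactly $N$ steps this count reaches zero, and since every vertex of $G_N = \widehat{G}$ still has in-degree at least $2$, every vertex has in-degree exactly $2$; that is, $\widehat{G}$ is in-degree $2$-regular. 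This also confirms that the process terminates, and the order-independence of the final graph was already noted before the statement.

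For the bijection, the key point is that each lag fixes the old vertices and yields, by Lemma \ref{l:bijection}, a bijection $\theta_k \colon E_k^{\bullet} \to \{\, \text{paths in } G_{k+1} \text{ with range and source in } V_k \,\}$; moreover, because the embedding $V_k \subseteq V_{k+1}$ is exactly the restriction of $\theta_k$ to vertices, the map $\theta_k$ preserves the range and source of every path. I would then prove, by induction on $k$, that the composite $\Theta_k := \theta_{k-1} \circ \cdots \circ \theta_0$ is a well-defined bijection from $E^{\bullet} = E_0^{\bullet}$ onto the set of paths in $G_k$ whose range and source lie in $V = V_0$. The base case $k=1$ is Lemma \ref{l:bijection} itself.

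For the inductive step, suppose $\Theta_k$ is such a bijection. Since $V_0 \subseteq V_k$, every path in the image of $\Theta_k$ lies in $E_k^{\bullet}$, so $\theta_k$ may be applied to it; because $\theta_k$ preserves endpoints, $\theta_k \circ \Theta_k$ lands in the set of paths of $G_{k+1}$ with range and source in $V_0$. Injectivity is immediate from injectivity of $\Theta_k$ and of $\theta_k$. For surjectivity, given a path $\widehat{\mu}$ in $G_{k+1}$ with endpoints in $V_0 \subseteq V_k$, surjectivity of $\theta_k$ provides a preimage $\mu \in E_k^{\bullet}$, and since $\theta_k$ fixes endpoints, $\mu$ has range and source in $V_0$; the inductive hypothesis then supplies a preimage of $\mu$ under $\Theta_k$. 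Taking $k = N$ yields the desired bijection $\theta := \Theta_N$, with the stated embedding $V \subseteq \widehat{V}$ being the composite of the inclusions $V_0 \subseteq \cdots \subseteq V_N$.

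The only real subtlety, and the step I would be most careful about, is the surjectivity argument in the inductive step: one must verify that a preimage of an endpoint-in-$V_0$ path again has its endpoints in $V_0$ rather than merely in $V_k$, so that it actually lies in the image of $\Theta_k$. This is precisely where the endpoint-preserving property of each $\theta_k$ is used, and it is what allows the \emph{range and source in $V$} condition to propagate unchanged through the entire chain of lags.
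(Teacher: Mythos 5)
Your proposal is correct and follows essentially the same route as the paper, which simply states that the corollary results from applying Lemma \ref{l:transitive-less-in-deg-3} and Lemma \ref{l:bijection} at each step of the iterated lag construction. Your induction merely spells out the details the paper leaves implicit, in particular the (correct and worthwhile) observation that surjectivity of the composite relies on each $\theta_k$ preserving the endpoints of paths.
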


Now let $G=(V,E)$ be a finite directed graph with in-degree at least $2$ at every vertex, and $\widehat{G} = (\widehat{V},\widehat{E})$ be the graph in Corollary \ref{c:in-deg-reg-2}. Given a strong edge coloring $c$ of $\widehat{G}$ with two colors $\{1,2\}$, each edge $g \in E$ inherits a labeling $\ell$ given by $\ell(g) := c(\theta(g))$ with labels $\{c(\theta(g))\}$. We then extend this labeling to paths in $G$ by setting for any path $\lambda = g_1...g_n \in E^{\bullet}$ in $G$ the label $\ell(\lambda) = c(\theta(\lambda)) = c(\theta(g_1)) ... c(\theta(g_n))$ where $\theta(\lambda) \in \widehat{E}^{\bullet}$ is the path in $\widehat{G}$ corresponding to $\lambda$ whose range and source are always in $V$.

Using the labeling $\ell$ we construct a Cuntz-Krieger family $\rS^{\ell} = (S^{\ell}_v,S^{\ell}_e)$ for our original graph $G$ as follows: let $\H$ be a separable infinite dimensional Hilbert space. Let $\K = \bigoplus_{v \in V} \H_v$ where $\H_v$ is a copy of $\H$ identified via a unitary $J_v : \H_v \rightarrow \H$. First we define $S^{\ell}_v$ to be the projection onto $\H_v$ for $v \in V$. Then, for $e \in E$ we define $S^{\ell}_e$ by linearly extending the rule
$$
S^{\ell}_e\xi =  \begin{cases} J_{r(e)}^*Z_{\ell(e)} J_{s(e)} \xi & \text{ for } \xi \in H_{s(e)}\\
0  & \text{ for } \xi \in H_{s(e)}^{\perp}.
\end{cases}
$$ 
where $Z_{\ell(e)}$ is the composition of Read isometries $Z_1$ and $Z_2$ given as follows: for each $e\in E$ there are $f_1,...,f_j \in \widehat{E}$ (or non at all when $r(e)$ has in-degree $2$ in $G$) such that $\theta(e) = f_1 f_2 ... f_j \hat{e}$ as in the iterated construction of $\widehat{G}$. Thus, we get that $Z_{\ell(e)} = Z_{c(\theta(e))} = Z_{c(f_1)} \circ ... \circ Z_{c(f_j)} \circ Z_{c(\hat{e})}$.

\begin{proposition}\label{p:CKgen}
Let $G$ be a transitive and finite directed graph such that all vertices have in-degree at least $2$, and let $\widehat{G}$ be the in-degree $2$ regular graph constructed in Corollary \ref{c:in-deg-reg-2}. Let $p$ be the period of $\widehat{G}$. Then for any strong edge coloring $c: \widehat{E} \rightarrow \{1,2\}$ for $\widehat{G}$ we have that $\rS^{\ell}$ is a CK family for $G$. Furthermore, if $c$ is $p$-synchronizing for $\widehat{G}$, then the free semigroupoid algebra $\fS^{\ell}$ generated by $\rS^{\ell}$ as above is $B(\K)$.
\end{proposition}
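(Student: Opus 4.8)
The plan is to prove the two assertions in turn. For the claim that $\rS^{\ell}$ is a CK family, conditions~1 and~2 are immediate: the $S^{\ell}_v$ are by construction the orthogonal projections onto the summands $\H_v$ of $\K$, and since each $Z_{\ell(e)}$ is a product of the Read isometries $Z_1,Z_2$ it is an isometry, so $(S^{\ell}_e)^*S^{\ell}_e = J_{s(e)}^* Z_{\ell(e)}^* Z_{\ell(e)} J_{s(e)} = S^{\ell}_{s(e)}$. The real content is the CK relation at a fixed $v\in V$. Since $S^{\ell}_e (S^{\ell}_e)^* = J_v^* Z_{c(\theta(e))} Z_{c(\theta(e))}^* J_v$ for $e\in r^{-1}(v)$, it suffices to show $\sum_{e\in r^{-1}(v)} Z_{c(\theta(e))}Z_{c(\theta(e))}^* = I_{\H}$. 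I would argue that the words $\{c(\theta(e)) : e\in r^{-1}(v)\}$ form a complete prefix code: in $\widehat G$ the edges into $v$ trace back through the added in-degree-$2$ vertices until reaching a source in $V$, so reading colors backward from $v$ there are two choices at each vertex and one stops precisely upon first meeting a vertex of $V$. This makes the family prefix-free (each $\theta(e)$ terminates at the first $V$-vertex) and complete (every backward color word reaches $V$ after finitely many steps). As $Z_1Z_1^*+Z_2Z_2^*=I_{\H}$, peeling the incoming tree from $v$ by induction gives the desired identity, and condition~3 then follows from condition~4.

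For the second assertion I would invoke Lemma~\ref{l:cornerB(H)}, so it suffices to produce for each $v\in V$ a $w\in V$ with $S^{\ell}_v\fS^{\ell} S^{\ell}_w = S^{\ell}_v B(\K) S^{\ell}_w$; in fact I take $w=v$. Telescoping the unitaries gives $S^{\ell}_\lambda = J_{r(\lambda)}^* Z_{c(\theta(\lambda))} J_{s(\lambda)}$ for every $\lambda\in E^{\bullet}$, and since $S^{\ell}_v\fS^{\ell} S^{\ell}_v$ is the WOT-closed span of the $S^{\ell}_\lambda$ over cycles $\lambda$ at $v$, Corollary~\ref{c:in-deg-reg-2} identifies these cycles with the paths $\widehat\lambda$ in $\widehat G$ from $v$ to $v$. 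Writing $\sigma=c(\widehat\lambda)$, such a path exists exactly when $v\cdot\sigma = v$, so conjugating by $J_v$ shows $S^{\ell}_v\fS^{\ell} S^{\ell}_v$ is unitarily equivalent to $\overline{\Span}^{\wot}\{Z_\sigma : v\cdot\sigma = v\}\subseteq B(\H)$, and it remains to prove this WOT-closed span is all of $B(\H)$.

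This is where $p$-synchronization is used. Let $\widehat V=\sqcup_i \widehat V_i$ be the cyclic decomposition of $\widehat G$ and say $v\in\widehat V_{i_0}$. By the second part of Corollary~\ref{c:p-synch-exists}, applied to the fixed coloring $c$, I choose a $p$-synchronizing word $\mu$ whose distinguished vertex in component $i_0$ is $v$ itself; this is possible because $v\in V\cap\widehat V_{i_0}$. Then for every word $\delta$ with $|\delta|\equiv 0\pmod p$ we have $v\cdot\delta\in\widehat V_{i_0}$, hence $v\cdot(\delta\mu)=(v\cdot\delta)\cdot\mu = v$, so every $Z_{\delta\mu}=Z_\delta Z_\mu$ with $p\mid|\delta|$ lies in the corner. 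By Proposition~\ref{p:pgen} the algebra generated by length-$p$ words equals $\overline{\Span}^{\wot}\{Z_\delta : p\mid |\delta|\}=B(\H)$, so WOT-continuity of right multiplication by the fixed operator $Z_\mu$ gives $B(\H)\,Z_\mu\subseteq\overline{\Span}^{\wot}\{Z_\sigma : v\cdot\sigma=v\}$. Finally $Z_\mu$ is an isometry (being a color word), so $A=(AZ_\mu^*)Z_\mu\in B(\H)Z_\mu$ for all $A\in B(\H)$; thus the corner is $B(\H)$ and Lemma~\ref{l:cornerB(H)} yields $\fS^{\ell}=B(\K)$.

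The main obstacle is the second assertion, specifically converting synchronization into a corner-filling family of cycles. Two points must align: one needs the distinguished vertex to be a genuine vertex of $G$ rather than an auxiliary vertex of $\widehat G$, which is exactly the role of the flexibility in Corollary~\ref{c:p-synch-exists}; and one must match the lengths of the available words — multiples of $p$ after appending $\mu$ — to the length restrictions in Davidson's rank-one WOT-approximation underlying Proposition~\ref{p:pgen}. Once the prefix-code structure of the incoming colorings at each vertex is recognized, the CK-family verification is comparatively routine.
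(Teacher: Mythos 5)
Your proposal is correct and follows essentially the same route as the paper: the CK relation is verified by the same telescoping identity (your ``complete prefix code'' at each lagged vertex is exactly the paper's repeated application of $Z_1Z_1^*+Z_2Z_2^*=I_\H$ along the chain $v, v_1,\dots,v_{d_v-2}$), and the corner argument uses the same ingredients in the same order --- relocating the distinguished vertex to $v$ via the second part of Corollary~\ref{c:p-synch-exists}, realizing all $Z_\gamma Z_{\gamma_v}$ with $p\mid|\gamma|$ as cycles at $v$ via the bijection $\theta$, invoking Proposition~\ref{p:pgen}, and cancelling the isometry $Z_{\gamma_v}$ before applying Lemma~\ref{l:cornerB(H)}. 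No gaps.
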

 
\begin{proof}
We first show that $\rS^{\ell}$ is a CK family, given a strong edge coloring $c$ for $\widehat{G}$. It is easy to show by definition that $\rS^{\ell}$ is a TCK family, so we show the condition that makes it into a CK family. Indeed, for $v\in V$ we have that
\begin{equation} \label{e:labelck}
\sum_{e\in r^{-1}(v)} S_e^{\ell}(S_e^{\ell})^* = S^{\ell}_vJ_v^* \Big( \sum_{e \in r^{-1}(v)}Z_{\ell(e)}Z_{\ell(e)}^* \Big) J_v S^{\ell}_v.
\end{equation}
Now, let the in-degree of $v$ be $d=d_v$, and let $(u_0,...,u_{d-1})$ be the sources of edges incoming to $v$ in $G$. Then in $\widehat{G}$ the path $\theta(e_j)$ (associated to the edge $e_j$ from $u_j$ to $v$ in $G$) is given by $\theta(e_j) = f_1f_2 .. f_j \hat{e}_j$. Hence, since $c$ is a strong edge coloring with two colors we see that 
$$
Z_{c(f_1...f_{d-2})}Z_{c(f_1...f_{d-2})}^* = Z_{c(\theta(e_{d-1}))} Z_{c(\theta(e_{d-1}))}^* +  Z_{c(\theta(e_{d-2}))}Z_{c(\theta(e_{d-2}))}^*
$$
as well as
$$
Z_{c(f_1...f_j)}Z_{c(f_1...f_j)}^* = Z_{c(\theta(e_j))} Z_{c(\theta(e_j))}^* +  Z_{c(f_1...f_jf_{j+1})}Z_{c(f_1...f_jf_{j+1})}^*.
$$
By applying these identities repeatedly we obtain that
$$
\sum_{e \in r^{-1}(v)}Z_{\ell(e)}Z_{\ell(e)}^* = I_{\H}.
$$
Thus from equation \eqref{e:labelck} we get that $\rS^{\ell}$ is a CK family.

Next, suppose that the strong edge coloring $c$ of $\widehat{G}$ is $p$-synchronizing. We show that the free semigroupoid algebra $\fS^{\ell}$ of $\rS^{\ell}$ for the graph $G$ is $B(\K)$. Let $v \in V$ be a vertex. By the second part of Corollary \ref{c:p-synch-exists} we have a $p$-synchronizing word $\gamma_v$ for $v \in \widehat{V}$ in the sense that whenever $u \in \widehat{V}$ is in the same cyclic component of $v$ in $\widehat{G}$, then $u \cdot \gamma_v = v$. Let $\gamma$ be a word in two colors of length divisible by $p$. There is a unique path $\widehat{\lambda}$ in $\widehat{G}$ with $r(\widehat{\lambda}) =v$ such that $c(\widehat{\lambda}) = \gamma$. Since the length of $\widehat{\lambda}$ is divisible by $p$, we have that  $s(\widehat{\lambda})$ must also be in the same cyclic component as $v$, so by the $p$-synchronizing property of $\gamma_v$ there is a unique path $\widehat{\lambda}_v$ with $c(\widehat{\lambda}_v) = \gamma_v$ and $r(\widehat{\lambda}_v) = s(\widehat{\lambda})$ and $s(\widehat{\lambda}_v) = v$. This defines a cycle $\widehat{\lambda} \widehat{\lambda}_v$ around $v$ whose color is $c(\widehat{\lambda}\widehat{\lambda}_v) = \gamma\gamma_v$. 

By Corollary \ref{c:in-deg-reg-2}, there is a unique cycle $\mu$ around $v$ in $G$ such that $\theta(\mu) = \widehat{\lambda} \widehat{\lambda}_v$. Then $\ell(\mu) = c(\widehat{\lambda}\widehat{\lambda}_v) = \gamma \gamma_v$, and we get that 
$$
S^{\ell}_{\mu} = S^{\ell}_vJ_{v}^*Z_{\gamma}Z_{\gamma_v} J_{v} S^{\ell}_v \in  S^{\ell}_v \mathfrak{S}S^{\ell}_v.
$$

Since $\widehat{\lambda}$ is a general path with range in $v$ whose length is divisible by $p$, we see that $c(\widehat{\lambda}) = \gamma$ is an arbitrary word of length divisible by $p$. Thus, by Proposition \ref{p:pgen} we obtain $S_vJ_v^*BZ_{\gamma_v} J_v S_v \in  S_v \fS S_v$ for any $B \in B(\H)$. By taking $B = A Z_{\gamma_v}^*$ we see that $S_v J_v^* A J_v S_v \in S_v \fS S_v$ for any $A \in B(\H)$. Finally, we have shown that $S_v \fS S_v = S_v B(\K) S_v$ for arbitrary $v\in V$ so that by Lemma \ref{l:cornerB(H)} we conclude that $\fS = B(\K)$.
\end{proof}

\begin{example}\label{e:embed}
In the case where the graph $G$ is a single vertex with $d\geq 3$ edges, the construction of $\widehat{G}$ yields the graph shown in Figure \ref{f:onevert}.

\begin{figure}[H]
  \begin{tikzcd}
  v_{d-2} \arrow{d}  \\
   v_{d-3} \arrow[d]\\
   \vdots  \arrow[d]\\
   v_{1} \arrow{d}  \\
     v \arrow[loop left, "e"] \arrow[u, bend right=30]  \arrow[uu, bend right=30] \arrow[uuu, bend right=30]  \arrow[uuuu, bend right=30] \arrow[uuuu, bend left=30]
  \end{tikzcd}
	\caption{Splitting a vertex with $d\geq 3$ loops.} \label{f:onevert}
\end{figure}
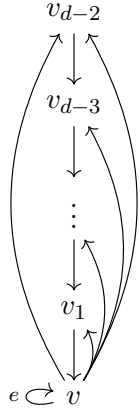

A strong $2$-coloring of the graph in Figure \ref{f:onevert} lifts to a coloring of $G$ by words in $\{1,2\}$. This determines a monomial embedding $\O_d \hookrightarrow \O_2$, where each generator for $\O_d$ is sent to the appropriate composition of the generators of $\mathcal{O}_2$. Such monomial embeddings arise and are studied in \cite{Lin20}. If we represent the canonical generators of $\O_2$ by a pair of Read isometries $Z_1,Z_2$, we obtain a representation of $\O_d$. If the coloring is synchronizing, the generating isometries of $\O_d$ will generate $B(\H)$ as a free semigroup algebra. In fact, any strong 2-coloring of the graph in Figure \ref{f:onevert} is synchronizing: if the edge labeled $e$ has color $i \in \{1,2\}$, then it is easy to see that $i^d$ is a synchronizing word for the vertex $v$.
\end{example}

\section{Self-adjoint free semigroupoid algebras} \label{s:main-theorem}

In this section we tie everything together to obtain our main theorem, and make a few concluding remarks.

\vspace{4pt}

\noindent {\bf Theorem \ref{t:safsa}} (Self-adjoint free semigroupoid algebras){\bf .} \emph{
Let $G = (V,E)$ be a finite graph. There exists a fully supported CK family $\rS=(S_v,S_e)$ which generates a \emph{self-adjoint} free semigroupoid algebra $\fS$ if and only if $G$ is the union of transitive components.}

\emph{Furthermore, if $G$ is transitive and not a cycle then $B(\H)$ is a free semigroupoid algebra for $G$ where $\H$ is a separable infinite dimensional Hilbert space.}

\vspace{0pt}

\begin{proof}
If $\fS$ is self-adjoint, Theorem \ref{t:sa-rest} tells us that $G$ must be the disjoint union of transitive components. 

Conversely, if $G$ is the union of transitive components, we will form a CK family whose free semigroupoid algebra is self-adjoint for each transitive component separately, and then define the one for $G$ by taking their direct sum. Hence, we need only show that every finite transitive graph $G$ has a self-adjoint free semigroupoid algebra. Then there are two cases:

\begin{enumerate}
\item {\bf If $G$ is a cycle of length $n$:}
By item (1) of \cite[Theorem 5.6]{DDL20} we have that $M_n(L^{\infty}(\mu))$ is a free semigroupoid algebra when $\mu$ is a measure on the unit circle $\bT$ which is not absolutely continuous with respect to Lebesgue measure $m$ on $\bT$.

\item {\bf If $G$ is not a cycle:}
By Corollary \ref{c:in-deg-2} we may assume without loss of generality that $G$ has no vertices with in-degree $1$. By the first part of Corollary \ref{c:p-synch-exists} there exists a $p$-synchronizing strong edge coloring $c$ for $\widehat{G}$, so we may apply Proposition \ref{p:CKgen} to deduce that $B(\H)$ is a free semigroupoid algebra for $G$.
\end{enumerate}
\end{proof}

\begin{remark}
Note that in the proof of Theorem \ref{p:CKgen}, since $\widehat{G}$ is always in-degree $2$-regular, we only needed the in-degree $2$-regular case of the periodic Road Coloring Theorem in Corollary \ref{c:p-synch-exists}. On the other hand, we could only prove Proposition \ref{p:pgen} for the free semigroup on two generators, so it was also necessary to reduce the problem to the in-degree $2$-regular case. The latter explains why an iterated version of Proposition \ref{p:CKgen} akin to the one for the first construction in Proposition \ref{p:edge-cont-FSA} is not so readily available. 
\end{remark}

To construct a suitable Cuntz-Krieger family as in the discussion preceding Proposition \ref{p:CKgen}, for each vertex in $G$ with in-degree at least $3$ we must choose a monomial embedding of $\O_d$ into $\O_2$. Example \ref{e:embed} shows that choosing a monomial embedding is equivalent to choosing a strong edge coloring of a certain binary tree with $d$ leaves. Constructing such a tree for each vertex gives the construction of $\widehat{G}$, and this is where the intuition for our proof originated from.

\subsection*{Acknowledgments} The first author would like to thank Boyu Li for discussions that led to the proof of Proposition \ref{p:pgen}. Both authors are grateful to the anonymous referees for pointing out issues with older proofs, and for providing suggestions that improved the exposition of the paper. Both authors are also grateful to Florin Boca and Guy Salomon for many useful remarks on previous draft versions of the paper.

%%%%%%%%%%%%%%%%%%%%%%%%%%%%%%%%%%%%%%%%%%%

\end{document}